\theoremstyle{plain}
\newtheorem{theorem}{Theorem}[section]
\newtheorem{lemma}[theorem]{Lemma}
\newtheorem{proposition}[theorem]{Proposition}
\theoremstyle{remark}
\newtheorem{problem}[theorem]{Problem}
\newtheorem{remark}[theorem]{Remark}
\newtheorem{example}{Example}
\newcommand{\Z}{\mathbb{Z}}
\newcommand{\Q}{\mathbb{Q}}
\DeclareMathOperator{\charac}{char}
\DeclareMathOperator{\Aut}{Aut}
\DeclareMathOperator{\GL}{GL}
\DeclareMathOperator{\SL}{SL}
\DeclareMathOperator{\Norm}{Norm}
\DeclareMathOperator{\End}{End}
\begin{document} 

\title[Free symmetric and unitary pairs]{Free symmetric and unitary pairs in 
division rings infinite-dimensional over their centers}
\author{Vitor O.~Ferreira}
\author{Jairo Z.~Gon\c{c}alves}
\address{Department of Mathematics, University of S\~{a}o Paulo, S\~{a}o Paulo, 
05508-090, Brazil}
\email{vofer@ime.usp.br, jz.goncalves@usp.br}
\thanks{Both authors were partially supported by FAPESP-Brazil, Proj.~Tem\'atico 
2009/52665-0. The second author was partially supported by Grant CNPq 
301.320/2011-0.}

\dedicatory{Dedicated to A.~I.~Lichtman, a pioneer in division rings.}

\begin{abstract}
Let $D$ be a division ring infinite-dimensional over its center $k$ with 
multiplicative group $D^{\times}$. We show 
that if $D$ belongs to certain families, there exist free symmetric and 
unitary pairs in $D^{\times}$ with respect to a $k$-involution on $D$.
\end{abstract}

\maketitle


\section{Introduction}\label{S:Intro}

Lichtman conjectured in \cite{Lichtman77} that if $D$ is a noncommutative 
division ring, then its multiplicative group will 
contain a noncyclic free subgroup. Division rings come, often, accompanied
by natural involutions (this is the case,
for instance, of division rings of fractions of group algebras which are
Ore domains). In \cite{GS06}, in their quest to prove an 
analogue of Lichtman's conjecture for a division ring with an involution, 
the authors showed that if the center $k$ of $D$ is a nonabsolute field of 
characteristic different from $2$ and $D$ is 
finite-dimensional over $k$, then the multiplicative group of $D$
contains a noncyclic free subgroup generated by symmetric or
unitary elements, except when $D$ is a quaternion algebra with an appropriate 
involution in each case. They were thus led to propose in \cite{GS11} that, 
except in some very special circumstances, a noncomutative division ring 
$D$ will contain, inside its multiplicative group, a free noncyclic 
subgroup which is generated by symmetric or unitary elements with respect to 
whichever involution that is present.

Not much is known when $D$ is infinite-dimensional over its center (see \cite{FGM05}), 
and this is the situation we are going to address in the present paper.

\medskip

In what follows, we shall adopt the following notation and definitions.

Given a field $k$ and a $k$-algebra $A$, a \emph{$k$-involution} on $A$ is
a $k$-linear operator $\ast$ of $A$ satisfying $(ab)^{\ast}=b^{\ast}a^{\ast}$ 
and $a^{\ast\ast}=a$, 
for all $a,b\in A$. An element $a \in A$ is said to be \emph{symmetric} if 
$a^{\ast}=a$, \emph{anti-symmetric} if $a^{\ast}=-a$, and \emph{unitary} 
if $aa^{\ast}=a^{\ast}a=1$. 

The group of invertible elements of $A$ will be denoted by $A^{\times}$. A pair
of elements of $A^{\times}$ will be said to be \emph{free} if the subgroup
they generate inside $A^{\times}$ is free of rank $2$.

By an \emph{involution} on a group $G$ one understands an anti-automorphism of $G$ of order $2$.
Thus, if $G$ is a group and $k$ is a field, then, given an involution
on $G$, its $k$-linear extension to the group algebra $kG$ is a $k$-involution which, in case $kG$ happens to be an Ore domain with division ring of quotients $D$, extends to a unique $k$-involution on $D$.
Such an involution on $D$ will be said to be \emph{induced} by the
involution on $G$.
 
In \cite{GS11}, the authors considered the case where $D$ is the division ring of quotients of 
the group algebra of a nonabelian torsion free nilpotent group $G$ over a field of 
characteristic different from $2$, with an involution that is induced by one on $G$. 
However, the study was not complete, since many involutions were 
not taken into account. Here we come back to the subject, exhausting almost all cases. 

Given two elements $\alpha$ and $\beta$ in a group, their commutator 
$\alpha^{-1}\beta^{-1}\alpha\beta$ will be denoted by $[\alpha,\beta]$.
By the \emph{Heisenberg group} we shall understand the free nilpotent
group of class $2$ generated by $2$ elements, that is, the group $\Gamma$ presented by
\begin{equation}\label{eq:heis}
\Gamma=\langle x,y : [[x,y],x]=[[x,y],y]=1\rangle.
\end{equation}
It is well known that
$\lambda=[x,y]$ generates the center of $\Gamma$ and has infinite order;
moreover, the quotient of 
$\Gamma$ by its center is a free abelian group of rank $2$. The elements of
$\Gamma$ can be written uniquely in the form $\lambda^ry^mx^n$, with $r,m,n\in\Z$.

The main result of the paper is the following.

\begin{theorem}\label{T:NilpD}
  Let $D$ be the division ring of quotients of the group algebra of the Heisenberg group $\Gamma$
  over a field $k$ of characteristic different from $2$ and let $\ast$ be a $k$-involution
  on $D$ which is induced by an involution on $\Gamma$. Then $D$ contains free symmetric
  and unitary pairs if the involution on $\Gamma$ is not of form $x^{\ast}=\zeta x$ and
  $y^{\ast}=\eta y$, for $\zeta,\eta$ in the center of $\Gamma$. In this last case, 
	free symmetric pairs exist if both $\zeta$ and $\eta$ are not odd powers of $[x,y]$,
	and free unitary pairs exist in the complementary case.
\end{theorem}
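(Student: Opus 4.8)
The approach is to classify the involutions, reduce the question to finitely many normal forms, and then obtain the free pairs by specializing the central parameter of $D$ so as to bring the finite-dimensional theorem of \cite{GS06} into play.

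An anti-automorphism of order two of $\Gamma$ has the form $g\mapsto\phi(g)^{-1}$ for a unique automorphism $\phi$ with $\phi^{2}=\mathrm{id}$. Since $\mathrm{Inn}(\Gamma)$ consists exactly of the central translations $x\mapsto[x,y]^{p}x,\ y\mapsto[x,y]^{q}y$ and $\mathrm{Out}(\Gamma)\cong\GL_{2}(\Z)$, classifying such $\phi$ up to conjugacy in $\Aut(\Gamma)$ reduces to the conjugacy classes of elements of order dividing $2$ in $\GL_{2}(\Z)$ — the identity, $-I$, and the reflections — together with a residual pair of parities coming from the central part; this produces a short explicit list of normal forms. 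The classes whose $\GL_{2}(\Z)$-component is $-I$ are precisely those with $x^{\ast}=\zeta x$, $y^{\ast}=\eta y$ for central $\zeta,\eta$; the others I will call \emph{generic}. Any automorphism of $\Gamma$ extends to $D$ and conjugates an involution to an equivalent one, carrying symmetric (resp.\ unitary) elements and free pairs to symmetric (resp.\ unitary) elements and free pairs, so it is enough to deal with one involution per class.

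Realize $D$ as the division ring of fractions of the quantum torus $F\langle x^{\pm1},y^{\pm1}\mid xy=\lambda yx\rangle$ over its centre $F=k(\lambda)$, where $\lambda=[x,y]$ is transcendental over $k$. A rank-one place of $F$ sending $\lambda$ to a primitive $n$-th root of unity (with $n$ prime to $\charac k$) extends to a valuation of $D$ whose residue division ring is finite-dimensional over its centre: for $n=2$ it is the generic quaternion algebra $\left(\tfrac{x^{2},y^{2}}{k(x^{2},y^{2})}\right)$, and for $n\geq 3$, when a compatible specialization is available, a generic degree-$n$ symbol division algebra over a rational function field — in both cases the centre is nonabsolute and of characteristic $\neq 2$, so \cite{GS06} applies to it. Two units of $D$ that lie in the valuation ring and whose residues generate a free group generate a free group in $D$, since any relation would survive the reduction. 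When the place is $\ast$-compatible the involution descends too, and one takes the residue pair to be a free symmetric (resp.\ unitary) pair furnished by \cite{GS06}; a lift is then corrected to be exactly symmetric by replacing $u$ with $\tfrac{1}{2}(u+u^{\ast})$, or exactly unitary by applying the Cayley transform $(1+s)(1-s)^{-1}$ to $s$, the anti-symmetric part of a lift of the Cayley preimage of the given unitary element — here $\charac k\neq 2$ is essential. When no convenient $\ast$-compatible place with finite-dimensional residue is at hand, one instead writes down symmetric, or anti-symmetric, units of $D$ explicitly from the form of $\ast$ — using Cayley transforms and the remark that $huh^{-1}$ is symmetric (resp.\ unitary) whenever $u$ is and $h$ is unitary — and uses a place only to certify freeness.

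Running through the normal forms, for each generic involution and for each involution $x^{\ast}=\zeta x,\ y^{\ast}=\eta y$ in which $\zeta$ and $\eta$ are \emph{not both} odd powers of $[x,y]$, one arranges the specialization — passing, for symmetric pairs, to a symbol algebra of degree $\geq 3$ when the quaternion reduction carries an unsuitable involution, or else constructing the elements inside $D$ — so that one obtains both a free symmetric and a free unitary pair, which lift to $D$. In the single remaining case, $x^{\ast}=\zeta x,\ y^{\ast}=\eta y$ with $\zeta$ and $\eta$ \emph{both} odd powers of $[x,y]$, the only $\ast$-compatible reduction to a noncommutative finite-dimensional algebra available in general sends $\lambda$ to $-1$ and there carries $\ast$ to the standard involution on the quaternion algebra, whose symmetric elements are central; this produces no free symmetric pair, but the unitary group of that algebra is ample, \cite{GS06} provides a free unitary pair, and it lifts — giving the complementary assertion.

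The main obstacle is the bookkeeping in the case analysis above: computing, for every $\Gamma$-involution, the isomorphism type of the induced involution on the residue algebra, and thereby locating the exact boundary of the dichotomy — in particular establishing that the case "$\zeta,\eta$ both odd powers" is unavoidably sent to the symplectic quaternion involution, while in every other case the reduction can be steered, if necessary at the cost of raising the degree, to an involution for which \cite{GS06} yields symmetric pairs. A secondary, more technical difficulty is that for involutions with $\lambda^{\ast}=\lambda^{-1}$ the requirement of $\ast$-compatibility severely restricts the usable places, so that for some of them one cannot pass to a finite-dimensional residue at all and must run the freeness argument directly inside $D$, which makes that verification considerably more intricate.
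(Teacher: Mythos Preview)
Your route differs from the paper's in a genuine way. The paper never relies on the involution descending to the quaternion residue: for every involution class it writes down explicit symmetric or unitary elements of $D$ (often of the shape $1+g+g^{\ast}$ or Cayley transforms $(1-r)(1+r)^{-1}$ of anti-symmetric $r$), pushes them through the specialization $\psi$ sending $\lambda\mapsto -1$, and then certifies freeness of the images in $\mathfrak{H}$ using either the quaternion free-pair criterion of \cite{GMS99} or the matrix criterion of Theorem~\ref{T:FreeMat}. Your plan instead treats \cite{GS06} as a black box on the residue algebra and lifts. That is more conceptual and, if it goes through, avoids most of the case-by-case computations; the paper's approach, in exchange, produces explicit free pairs, which it emphasizes.

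That said, there are genuine gaps in your execution. First, the $\ast$-compatibility of the specialization is asserted, not proved: for the involutions of types~\eqref{i2}--\eqref{i4} one has $x^{\ast}\notin Q=\widetilde{R}[x][y;\sigma]$ (e.g.\ $x^{\ast}=x^{-1}$), so $Q$ itself is not $\ast$-stable, and you must actually show that the localization $QM^{-1}$ and its maximal ideal $B$ are $\ast$-stable before the involution can descend to $\mathfrak{H}$. Second, your degree-$\geq 3$ fallback is misplaced: for every involution with $\lambda^{\ast}=\lambda^{-1}$ (this includes all of type~\eqref{i1}) the only $\ast$-compatible root-of-unity value of $\lambda$ is $-1$, so no higher-degree symbol algebra is available precisely where you invoke it; and for types~\eqref{i3}--\eqref{i4}, where higher degree is available, it is unnecessary since the induced quaternion involution is already of the second kind and \cite{GS06} furnishes both symmetric and unitary pairs. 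Third, you overclaim: for type~\eqref{i1} with $\zeta,\eta$ not both odd, the quaternion involution is orthogonal and admits no free unitary pair (cf.\ \cite[Lemma~2.6]{FGM05}), so your method does \emph{not} yield a unitary pair there---but the theorem does not assert one, so this is harmless for the stated result. Finally, your remark that for some $\lambda^{\ast}=\lambda^{-1}$ cases ``one cannot pass to a finite-dimensional residue at all'' is off: $\lambda\mapsto -1$ always lands in the quaternion algebra; the obstruction is not the absence of a residue but that the resulting first-kind involution may be the wrong type (symplectic or orthogonal) for the pair you want.
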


For the proof of Theorem~\ref{T:NilpD}, in Section~\ref{S:One}, the idea will be
to lift free pairs through specializations from $D$ to a suitable quaternion algebra where free pairs
of the desired form are known to exist. It  should be noted that in all cases
 explicit free pairs will be exhibited.

The inversion map $g\mapsto g^{-1}$ on a group $G$ will be called the
\emph{canonical involution} on $G$. In this special case, we can offer
the following improvement of Theorem~\ref{T:NilpD}.

\begin{theorem}\label{T:NilpDUsual}
  Let $G$ be a nonabelian torsion free nilpotent group, let $k$ be a field
  and let $D$ be the division ring of quotients of the group algebra $kG$. Then, if 
  $\charac k\neq 2$, $D^{\times}$ contains free symmetric
  and unitary pairs with respect to the $k$-involution on $D$ induced
  by the canonical involution on $G$.
\end{theorem}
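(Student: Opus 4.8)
The plan is to deduce Theorem~\ref{T:NilpDUsual} from the non-exceptional case of Theorem~\ref{T:NilpD}, by exhibiting inside $G$ a copy of the Heisenberg group on which the canonical involution restricts in a usable way, and then pulling the free pairs back up through an embedding of division rings.

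The group-theoretic heart of the argument is the claim that \emph{every nonabelian torsion-free nilpotent group contains a subgroup isomorphic to $\Gamma$}. To see this, let $c\geq 2$ be the nilpotency class of $G$. Since $\gamma_c(G)=[\gamma_{c-1}(G),G]\neq 1$ while $[\gamma_c(G),G]=\gamma_{c+1}(G)=1$, there exist $u\in\gamma_{c-1}(G)$ and $v\in G$ with $1\neq[u,v]\in Z(G)$; consequently $K=\langle u,v\rangle$ is torsion-free, $2$-generated and nilpotent of class exactly~$2$, and it remains to check that any such $K$ is isomorphic to $\Gamma$. Here $K'=\gamma_2(K)=\langle[u,v]\rangle\cong\Z$, and the crucial point --- which follows from a short case analysis on the Hirsch length of $K$ --- is that $K/K'$ is torsion-free: if it had free rank $0$ or $1$, then $K$ would be virtually infinite cyclic, or an extension of $\Z$ by $\Z$, hence isomorphic to $\Z$, to $\Z^2$, or to the Klein bottle group, none of which is nilpotent of class exactly~$2$. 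Being $2$-generated and torsion-free, $K/K'$ is thus free abelian of rank $2$ on the images of $u$ and $v$; and then the surjection $\Gamma\to K$ with $x\mapsto u$, $y\mapsto v$ --- well defined because $[u,v]$ is central in $K$ --- is injective, since it carries the normal-form elements $\lambda^ry^mx^n$ of $\Gamma$ to the pairwise distinct elements $[u,v]^rv^mu^n$ of $K$. Hence $K\cong\Gamma$.

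Now fix such a subgroup $K\cong\Gamma$. The canonical involution $g\mapsto g^{-1}$ of $G$ restricts on $K$ to the canonical involution of $\Gamma$, and this is \emph{not} of the exceptional form $x^{\ast}=\zeta x$, $y^{\ast}=\eta y$ with $\zeta,\eta\in Z(\Gamma)$: the relation $x^{-1}=\zeta x$ would force $\zeta=x^{-2}$, which lies outside $Z(\Gamma)$ because $x$ has infinite order modulo the center. Since $\charac k\neq 2$, Theorem~\ref{T:NilpD} therefore applies to $K$ and, in this non-exceptional case, produces free symmetric and free unitary pairs in $D_K^{\times}$, where $D_K$ denotes the division ring of quotients of $kK$ equipped with the $k$-involution induced by the canonical involution on $K$.

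It remains to transport these pairs into $D$. As $kK$ is a subring of the Ore domain $kG$ whose nonzero elements are invertible in $D$, the universal property of Ore localization yields an embedding $D_K\hookrightarrow D$; moreover $D_K$ is stable under the $k$-involution of $D$ (it is generated as a division ring by $kK$, which the involution preserves), and by the uniqueness of the extension of an involution from $kK$ the restriction of the $k$-involution of $D$ to $D_K$ is precisely the $k$-involution of $D_K$ induced by the canonical involution on $K$. Consequently every symmetric, respectively unitary, free pair in $D_K^{\times}$ is a symmetric, respectively unitary, free pair in $D^{\times}$, and the theorem follows. The main obstacle is the group-theoretic step: although elementary, verifying that a $2$-generated torsion-free nilpotent group of class $2$ must be $\Gamma$ --- in particular ruling out torsion in its abelianization --- requires the Hirsch-length bookkeeping indicated above, after which everything is formal.
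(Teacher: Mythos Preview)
Your proof is correct and follows essentially the same approach as the paper's: locate a Heisenberg subgroup inside $G$, observe that the canonical involution restricts to the canonical (type~\eqref{i2}, hence non-exceptional) involution on it, and apply Theorem~\ref{T:NilpD} to the division subring it generates. The paper's own argument is a two-line version of this that simply asserts the existence of $x,y\in G$ with $[x,y]\neq 1$ central in $\langle x,y\rangle$ and states $\langle x,y\rangle\cong\Gamma$ without further comment, so your Hirsch-length verification of that isomorphism and the explicit check on the embedding $D_K\hookrightarrow D$ are added detail rather than a different route.
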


\begin{proof}
  Take two elements $x$ and $y$ in $G$ satisfying  $[x, y]=\lambda \neq 1$ and
  $[x, \lambda]=[y, \lambda]=1$. Then $\langle x,y\rangle\cong\Gamma$ and 
  $\langle x,y\rangle$ is invariant under the canonical involution of $G$. 
  Now apply Theorem~\ref{T:NilpD} to the subdivision
  $k$-algebra of $D$ generated by $\Gamma$.
\end{proof}

Using somewhat similar methods, we are able to analyze the the following situation.
Let $A_1(\mathbb{Q})=\mathbb {Q} \langle s, t : st-ts=1 \rangle $ denote the 
\emph{first Weyl algebra} 
over the field of rational numbers $\Q$. We shall deal with
the problem of verifying the existence of free symmetric and unitary pairs in the division
ring of quotients of $A_1(\mathbb{Q})$, with respect to an involution which is an extension of one on 
$A_1(\mathbb{Q})$.

One of our main difficulties here is that  we  cannot consider the same kind of 
specializations used in the proof of Theorem~\ref{T:NilpD}.  So, we specialize to 
an algebra of dimension $9$ over its center. This idea yields a proof of the 
following result.

\begin{theorem} \label{T:Weyl}
Let $D$ denote the division ring of quotients of the first Weyl algebra over $\Q$.
Then $D^{\times}$ contains a free pair of the form $\{u, v^{-1}uv\}$, which is
\begin{enumerate}[(i)]
\item symmetric with respect to the involution $\ast$ on $D$ satisfying $s^{\ast}=t$
  and $t^{\ast}=s$, taking $u=\bigl(1+(ts)^{2}\bigr)\bigl(1+(ts +1)^{2}\bigr)^{-1}$ and 
  $v=(1+sts-tst)(1-sts+tst)^{-1}$.\label{Weyl:1}
\item unitary with respect to the involution $\ast$ on $D$ satisfying $s^{\ast}=s$
  and $t^{\ast}=-t$, taking $u=(1+ts+st)(1-ts-st)^{-1}$ and 
  $v=(1-t^{2}s+st^{2})(1+t^{2}s-st^{2})^{-1}$.\label{Weyl:2}
\end{enumerate}
\end{theorem}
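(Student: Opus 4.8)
The plan is to mimic the specialization strategy outlined for Theorem~\ref{T:NilpD}, but targeting a division algebra of dimension $9$ over its center rather than a quaternion algebra, since the Weyl algebra carries no quaternionic quotients. First I would fix the involution in each case and verify that the proposed $u$ and $v$ are genuinely symmetric (resp.\ unitary) elements of $D^{\times}$: in case~\eqref{Weyl:1}, since $(ts)^{\ast}=s^{\ast}t^{\ast}=ts$, the element $ts$ is symmetric, hence so is $1+(ts)^2$; and as $(ts+1)^{\ast}=ts+1$, the quotient $u=\bigl(1+(ts)^2\bigr)\bigl(1+(ts+1)^2\bigr)^{-1}$ is a ratio of commuting symmetric elements, thus symmetric. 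For $v$ one checks $(sts-tst)^{\ast}=-(sts-tst)$ is anti-symmetric, so $1\pm(sts-tst)$ are swapped by $\ast$, giving $v^{\ast}=v^{-1}$; then $(v^{-1}uv)^{\ast}=v^{\ast}u^{\ast}(v^{-1})^{\ast}=v^{-1}uv$, so $\{u,v^{-1}uv\}$ is a symmetric pair provided it is free. Case~\eqref{Weyl:2} is analogous: $ts+st$ is symmetric under $s^{\ast}=s,t^{\ast}=-t$ while $t^2s-st^2$ is anti-symmetric (each monomial picks up one sign from the single $t$ in $t^2s$ versus... — one must be careful here, $t^2s$ has $(t^2s)^{\ast}=s^{\ast}(t^{\ast})^2=st^2$, so $t^2s-st^2$ maps to $st^2-t^2s$, anti-symmetric), and $1\pm(ts+st)$, $1\mp(t^2s-st^2)$ behave correctly, so $u^{\ast}=u^{-1}$ and $v^{\ast}=v^{-1}$, making $\{u,v^{-1}uv\}$ unitary once freeness is established; note the Cayley-type shape $u=(1+a)(1-a)^{-1}$ with $a$ symmetric forces $uu^{\ast}=1$.

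The core of the argument is freeness, and here I would construct an explicit $\Q$-algebra homomorphism (specialization) from a suitable subring of $A_1(\Q)$ — or rather from the relevant localization — onto a central simple algebra $B$ of degree $3$ over a field, chosen so that the images of $u$ and $v$ land in a subgroup of $B^{\times}$ already known to be free. Concretely I expect one represents $s,t$ by explicit $3\times 3$ matrices over $\Q$ (or over a rational function field), say with $t$ a nilpotent-type Jordan block and $s$ chosen so that $st-ts$ specializes to the identity — such finite-dimensional solutions of $[s,t]=1$ exist once we pass to characteristic-zero matrix rings only after tensoring, so more precisely one specializes $A_1(\Q)\otimes_\Q\Q(\pi)$ or works modulo a prime, placing us in $M_3$ of a suitable field. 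The images $\bar u$, $\bar v$ are then concrete matrices, and I would compute $\bar v^{-1}\bar u\bar v$ and invoke a ping-pong / eigenvalue argument (or a known free-pair criterion for matrices, e.g.\ of Sanov or Tits type applied to the projective images) to conclude $\{\bar u,\bar v^{-1}\bar u\bar v\}$ generates a free group of rank $2$ in $\GL_3$. Since a group homomorphism cannot increase the relations, freeness of the image forces freeness of $\{u,v^{-1}uv\}$ in $D^{\times}$.

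The main obstacle, as the authors themselves flag, is precisely the unavailability of the quaternion specializations used for the Heisenberg case: one must identify the right degree-$3$ target and the right matrices so that (a) the relation $st-ts=1$ is respected after an appropriate scalar extension or reduction, and (b) the particular rational expressions chosen for $u$ and $v$ specialize to a pair for which freeness is verifiable. The delicacy is that $u$ and $v$ are built from $ts$ and from the anti-symmetric combinations $sts-tst$, $t^2s-st^2$; these are dictated by the involution, so one does not have the freedom to pick arbitrary free generators and then hit them — one must check that these specific, involution-compatible elements remain free. I would therefore expect the heart of the proof to be a somewhat intricate but elementary matrix computation showing the specialized $\bar u$ and $\bar v^{-1}\bar u\bar v$ have, say, infinite order and satisfy the ping-pong hypotheses, together with a verification that the specialization is well-defined on the subring generated by the entries appearing in the denominators of $u$ and $v$ (i.e.\ that $1+(ts+1)^2$ and $1-sts+tst$, and their case~\eqref{Weyl:2} analogues, specialize to invertible matrices).
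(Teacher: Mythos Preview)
Your outline is essentially the paper's strategy, and the involution checks for $u$ and $v$ are carried out correctly. Where the paper is sharper than your sketch is in the three technical hinges that you leave open.

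First, you do not say how to present $D_1(\Q)$ so that a specialization in the sense of Theorem~\ref{T:Localization} becomes available. The paper's device is Lemma~\ref{C:One}: one rewrites $D_1(\Q)$ as the division ring of quotients of $\Q(X)[Y;\sigma]$ with $X^{\sigma}=X+1$, via $X=ts$, $Y=t$. This is what makes the Weyl relation $st-ts=1$ disappear into the twist $XY=Y(X+1)$, and it is exactly this form that Theorem~\ref{T:Localization} can digest. Your suggestion of writing $s,t$ directly as $3\times 3$ matrices with $t$ a nilpotent Jordan block does not lead anywhere in characteristic zero (by the trace obstruction you allude to), and it is not how the reduction is organized.

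Second, the target is not $M_3$ over a field but the cyclic algebra $\mathfrak{C}=GF(3)(a,b)\langle \mathbf{i},\mathbf{j}:\mathbf{i}^3-\mathbf{i}=a,\ \mathbf{j}^3=b,\ \mathbf{i}\mathbf{j}=\mathbf{j}(\mathbf{i}+1)\rangle$, reached by applying Theorem~\ref{T:Localization} with the $3$-adic valuation on $\Q$. One then passes to $M_3\bigl(K(\mathbf{i})\bigr)$ only via the right regular representation of $\mathfrak{C}$ over $K(\mathbf{i})$. This two-step passage is what guarantees that the denominators in $u$ and $v$ specialize to units.

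Third, the freeness criterion is not ping-pong or Sanov but Theorem~\ref{T:FreeMat}: one exhibits a discrete valuation $\nu$ on $K(\mathbf{i})$ (coming from a prime of the integral closure lying over $1+a^2$ in part~\eqref{Weyl:1}, over $a$ in part~\eqref{Weyl:2}) for which the diagonal matrix $U$ has a unique $\nu$-maximal and a unique $\nu$-minimal eigenvalue, while every entry of $V^{\pm 1}$ has $\nu$-value zero. The verification of the last condition in part~\eqref{Weyl:1} is a genuine computation, done entry by entry (with Maple) modulo the relevant prime; it is not something one can wave through.
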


The proof of Theorem~\ref{T:Weyl} will be presented in Sections~\ref{S:Inv1}
and \ref{S:Inv2}.

Because $A_1(\mathbb{Q})$ has a $\mathbb{Q}$-automorphism sending $s$ to $-t$ and
$t$ to $s$, Theorem~\ref{T:Weyl} also guarantees the existence in the division ring
of quotients of the first Weyl algebra over $\mathbb{Q}$ of free symmetric pairs 
for the involution 
such that $s^{\ast}=-t, t^{\ast}=-s$, and of free unitary pairs for the involution 
such that $s^{\ast}=-s, t^{\ast}=t$.  


\section{Main tools}\label{S:tools}

In this section we state results that will be used in the proofs of
Theorems~\ref{T:NilpD} and \ref{T:Weyl}.

By a \emph{specialization} from a ring $R$ to a ring $S$ one understands a surjective ring
homomorphism $\alpha\colon R_0\to S$, where $R_0$ is a subring of $R$, such that
$\ker\alpha$ is contained in the Jacobson radical of $R_0$. Such a map will, then,
send non-units of $R_0$ to non-units of $S$. Therefore, any pair of elements of $R_0$ 
mapping to a free pair in $S^{\times}$ will be free in $R^{\times}$.

In the presence of valuations, the next result provides explicit specializations
from division rings of quotients of skew polynomial rings to division rings. We recall that if $k$ is a commutative 
ring and $\sigma$ is an automorphism of $k$, then the skew polynomial ring $k[X;\sigma]$
is the set of polynomials $a_0+Xa_1+\dots+X^na_n$, with $n\in\mathbb{N}$ and coefficients
$a_i\in k$, in the indeterminate $X$ and multiplication satisfying $aX=Xa^{\sigma}$, 
for all $a\in k$. If $k$ is an integral domain, then $k[X;\sigma]$ is an Ore
domain, which, hence, has a division ring of quotients.

\begin{theorem} \label{T:Localization} \cite[Theorem~3.5]{GS11}
Let $R$ be an integral domain with quotient field $F$, and assume that there exists 
a discrete nonarchimedean valuation $\nu$ on $F$ such that $\widetilde{R}=\{x\in F : \nu(x)\geq 0\}$ is the 
valuation ring of $F$ with maximal ideal $A=\{x \in F : \nu(x)>0 \} $. 
Let $a,b \in R$, with $\nu(a)=0$, let $\sigma$ be the $R$-automorphism of the 
polynomial ring $\widetilde{R}[X]$ given by $X^{\sigma}=aX+b$, and let
$Q=\widetilde{R}[X][Y;\sigma]$. Then the complement $M$ of the 
ideal $AQ$ is a right denominator set in $Q$. Further, in the right quotient ring 
$QM^{-1}$ of $Q$ with respect to $M$, the set $B$ of non-units is an ideal, and 
$B\cap Q=AQ$. \qed
\end{theorem}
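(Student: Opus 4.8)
The plan is to realize $QM^{-1}$ as the localization of $Q$ at the $\pi$-adic valuation determined by $A$, and to read off the assertions from the associated graded ring. Since $\nu$ is discrete, $\widetilde{R}$ is a discrete valuation ring; fix a uniformizer $\pi$, so that $A=\pi\widetilde{R}$, and note that $a$ is a unit of $\widetilde{R}$, since $\nu(a)=0$, so that $\sigma$ — the $\widetilde{R}$-algebra automorphism of $\widetilde{R}[X]$ determined by $X\mapsto aX+b$, which in particular fixes $\widetilde{R}$ elementwise — really is an automorphism, with inverse sending $X$ to $a^{-1}X-a^{-1}b$. The structural observation that drives the proof is that $\pi$ is \emph{central} in $Q=\widetilde{R}[X][Y;\sigma]$: it lies in the center of the commutative ring $\widetilde{R}[X]$, and $\pi Y=Y\pi^{\sigma}=Y\pi$. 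Hence $AQ=\pi Q=Q\pi$ is a two-sided ideal, and $Q/AQ\cong(\widetilde{R}/A)[X][Y;\bar{\sigma}]$ is a skew polynomial ring over the principal ideal domain $(\widetilde{R}/A)[X]$, with twist an automorphism, so it is a domain; thus $AQ$ is completely prime and $M=Q\setminus AQ$ is a multiplicative set containing $1$. Finally, as $\widetilde{R}$ is Noetherian, two applications of the Hilbert basis theorem (the second in its skew form) show that $Q$ is a Noetherian domain, hence a left and right Ore domain.

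Next I would set up the $\pi$-adic valuation on $Q$. Since $\pi$ is a central non-zero-divisor and $\bigcap_n\pi^nQ=0$ (a nonzero element of $Q$ has a nonzero coefficient in the separated ring $\widetilde{R}$), every nonzero $q\in Q$ has a well-defined $v(q)=\max\{n:q\in\pi^nQ\}$. The associated graded ring $\mathrm{gr}_\pi Q$ is isomorphic to the polynomial ring $(Q/\pi Q)[t]$ over the domain $Q/\pi Q$, hence is itself a domain; therefore $v$ is a valuation, that is, $v(q_1q_2)=v(q_1)+v(q_2)$ and $v(q_1+q_2)\ge\min\{v(q_1),v(q_2)\}$. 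In particular $M=\{q\ne 0:v(q)=0\}$, which re-proves that $M$ is multiplicatively closed.

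To show $M$ is a right denominator set, right reversibility is immediate, $Q$ being a domain. For the right Ore condition, let $q\in Q$ and $m\in M$; we may assume $q\ne 0$. Since $Q$ is right Ore there are $q'',m''\in Q$ with $m''\ne 0$ and $qm''=mq''$; then $q''\ne 0$ too, and applying $v$, using $v(m)=0$, gives $v(q'')=v(q)+v(m'')\ge v(m'')$. Hence $q''=\pi^{v(m'')}q'$ for some $q'\in Q$, and $m''=\pi^{v(m'')}m'$ with $m'\in M$; cancelling the central factor $\pi^{v(m'')}$ from $qm''=mq''$ yields $qm'=mq'$, as required. Thus $QM^{-1}$ exists, and every element of $M$ becomes a unit in it.

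Finally I would identify the non-units of $QM^{-1}$. If $v(q)=0$ then $q\in M$, so $q$ is a unit of $QM^{-1}$. On the other hand $\pi$ is \emph{not} a unit of $QM^{-1}$: an identity $\pi\cdot qm^{-1}=1$ would force $\pi q=m$, impossible since $v(\pi q)\ge 1>0=v(m)$; using that $\pi$ is central, it follows that no element of $\pi Q=AQ$ is a unit, and hence that $qm^{-1}$ is a non-unit precisely when $q\in AQ$. Therefore the set $B$ of non-units equals $(AQ)M^{-1}=\pi(QM^{-1})$, which is a two-sided ideal because $\pi$ is central, and $B\cap Q=\pi Q=AQ$. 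The step I expect to be the real obstacle is the right Ore condition for $M$, namely turning an arbitrary Ore relation in $Q$ into one whose denominator avoids $AQ$; it rests squarely on $\mathrm{gr}_\pi Q$ being a domain (so that $v$ is multiplicative) together with the centrality of $\pi$ in $Q$.
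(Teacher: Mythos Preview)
The paper does not give its own proof of this theorem: it is quoted verbatim from \cite[Theorem~3.5]{GS11} and closed with a \qed, so there is nothing in the present paper to compare your argument against.

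That said, your proof is correct and pleasantly transparent. The decisive observation is that the uniformizer $\pi$ is central in $Q$ (because $\sigma$ fixes $\widetilde{R}$ elementwise), which makes the $\pi$-adic filtration available; once you know $Q/\pi Q\cong(\widetilde{R}/A)[X][Y;\bar\sigma]$ is a domain, the associated graded ring is a domain and $v$ is genuinely multiplicative. Your reduction of the right Ore condition for $M$ to the ordinary Ore condition for the Noetherian domain $Q$---pulling out the common power of $\pi$ from the denominator and using multiplicativity of $v$ to see the numerator absorbs at least as many---is exactly the right move, and the identification $B=\pi\,QM^{-1}$ with $B\cap Q=\pi Q$ then drops out. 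One cosmetic remark: you justify $\bigcap_n\pi^nQ=0$ by pointing to ``the separated ring $\widetilde{R}$''; it would read more cleanly to say that $Q$ is free as a left $\widetilde{R}$-module on the monomials $X^iY^j$, so the $\pi$-adic filtration on $Q$ is just the coefficientwise one and is therefore separated because $\widetilde{R}$ is a DVR.
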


In the context of Theorem~\ref{T:Localization}, the canonical surjective map 
$QM^{-1}\to QM^{-1}/B$ defines, then, a specialization from the
division ring of quotients $F(X)(Y;\sigma)$ of the skew polynomial ring of 
$F(X)[Y;\sigma]$ to the division 
ring $QM^{-1}/B$.

In the proofs of Theorems~\ref{T:NilpD} and \ref{T:Weyl}, free pairs satisfying the
desired property with respect to the involution under consideration will be shown to 
exist by lifting free pairs, through specializations, from division rings 
where they are known (or proved) to exist. We shall construct specializations to
a quaternion algebra for Theorem~\ref{T:NilpD}, and to a cyclic algebra of degree
three for Theorem~\ref{T:Weyl}.

The existence of free pairs in quaternion and cyclic algebras will follow from
general results (such as those in \cite{GMS99} and \cite{GMS00}) or, via regular representations
of these algebras, with the help of the following criterion of freedom for elements in a 
matrix group, where valuations play again an important role.

\begin{theorem} \label{T:FreeMat} \cite[Theorem~2.10]{GS06}
  Let $\nu$ be a nonarchimedean valuation on a field $K$, let $n \geq 2$ be an integer, 
  and let $A, B \in \GL(n,K)$. Assume the following hold: 
  \begin{enumerate}[(a)]
  \item the matrix $A$ is diagonalizable over $K$ and has a unique eigenvalue with 
	  maximal $\nu$-valuation 
	  and a unique eigenvalue with minimal $\nu$-valuation; and
  \item all the entries of $B$ and of $B^{-1}$ have zero $\nu$-valuation.
  \end{enumerate}
  Then $\{A, B^{-1}AB\}$ is a free pair in $\GL(n,K)$.\qed
\end{theorem}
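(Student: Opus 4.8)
The plan is to prove this by a ping‑pong argument for the action of $\GL(n,K)$ on the projective space $\mathbb{P}^{n-1}(K)$, equipped with the ultrametric coming from $\nu$ (two lines being close when one admits a representative vector differing from a representative of the other by a vector of large valuation relative to the common scale). First I would use hypothesis~(a): since $A$ is diagonalizable over $K$, I would work in an eigenbasis $v_1,\dots,v_n$, with $Av_i=d_iv_i$ and, after reindexing, $\nu(d_1)$ the unique minimum and $\nu(d_n)$ the unique maximum of $\{\nu(d_1),\dots,\nu(d_n)\}$; write $\phi_1,\dots,\phi_n$ for the dual basis of linear functionals. The two strict inequalities in (a) make $A$ ``very proximal'': for $x=\sum_i c_iv_i$ with $\phi_1(x)\neq 0$ one has $A^mx=\sum_i c_id_i^mv_i$, and each application of $A$ lowers the valuation of the leading term relative to the others by at least $\delta:=\min_{i\neq 1}(\nu(d_i)-\nu(d_1))>0$, so $[A^mx]\to[v_1]$ monotonically as $m\to+\infty$; symmetrically $[A^{-m}x]\to[v_n]$ when $\phi_n(x)\neq 0$. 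Thus $A$ has attracting points $[v_1]$ (positive powers) and $[v_n]$ (negative powers), with repelling hyperplanes $\ker\phi_1$ and $\ker\phi_n$.

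Next, $h:=B^{-1}AB$ is conjugate to $A$, hence very proximal with attracting points $B^{-1}[v_1],B^{-1}[v_n]$ and repelling hyperplanes $\ker(\phi_1\circ B),\ker(\phi_n\circ B)$. Here hypothesis~(b) enters decisively: since every entry of $B$ and of $B^{-1}$ has valuation $0$, $B$ lies in $\GL(n,\mathcal{O})$ ($\mathcal{O}$ the valuation ring) and acts on $\mathbb{P}^{n-1}(K)$ as an isometry, and — the real content — the ``corner'' quantities $\phi_i(B^{\pm 1}v_j)$ for $i,j\in\{1,n\}$ are forced to be nonzero of controlled valuation. Consequently the four attracting points $[v_1],[v_n],B^{-1}[v_1],B^{-1}[v_n]$ are pairwise distinct and each lies a definite distance from all four repelling hyperplanes $\ker\phi_1,\ker\phi_n,\ker(\phi_1\circ B),\ker(\phi_n\circ B)$. (In the situations where this theorem is applied — regular representations of quaternion or cyclic algebras — $A$ is already diagonal, so the $\phi_i$ are coordinate functionals and these corner quantities are literally entries of $B^{\pm 1}$, which (b) makes units.)

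With this in hand I would run the ping‑pong lemma. Fix a small $\varepsilon$ and let $P_A$ be the union of the closed $\varepsilon$‑balls about $[v_1]$ and $[v_n]$, and $P_h$ the union of those about $B^{-1}[v_1]$ and $B^{-1}[v_n]$; the preceding paragraph lets us choose $\varepsilon$ so that $P_A\cap P_h=\emptyset$, $P_h$ avoids a fixed neighborhood of $\ker\phi_1\cup\ker\phi_n$, and $P_A$ avoids a fixed neighborhood of $\ker(\phi_1\circ B)\cup\ker(\phi_n\circ B)$. Using the ``instant contraction'' supplied by the ultrametric, I would then verify $A^k(P_h)\subseteq P_A$ for all $k\neq 0$ (positive powers landing in the $[v_1]$‑ball, negative ones in the $[v_n]$‑ball) and symmetrically $h^k(P_A)\subseteq P_h$ for all $k\neq 0$ — the inclusions hold already for $|k|=1$ and only improve afterwards. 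A base point outside $P_A\cup P_h$ is then moved inside by every nonempty reduced word in $A$ and $h$, so such a word is nontrivial and $\{A,h\}$ freely generates a rank‑$2$ free subgroup of $\GL(n,K)$.

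The step I expect to be the main obstacle is the second one: converting ``all entries of $B,B^{-1}$ are units'' into the quantitative general‑position statement that the attracting points of $A$ and $h$ miss each other's repelling hyperplanes by a uniform margin, and then choosing $\varepsilon$ and the auxiliary neighborhoods so that all the sets involved are simultaneously disjoint and the ping‑pong inclusions hold for every nonzero power at once.
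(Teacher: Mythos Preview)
The paper does not prove this theorem: it is quoted verbatim from \cite[Theorem~2.10]{GS06} and closed with a \qed, so there is no in-paper argument to compare your sketch against. Your ping-pong outline is in the right spirit and is indeed how such statements are established, but as a self-contained proof of the theorem \emph{as stated} it has a genuine gap.

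The issue is exactly the one you flag in your parenthetical. Hypothesis~(b) speaks about the entries of $B$ and $B^{-1}$ in the \emph{standard} basis of $K^n$, whereas your attracting points $[v_1],[v_n]$ and repelling functionals $\phi_1,\phi_n$ live in the \emph{eigenbasis} of $A$, which hypothesis~(a) only asserts to exist over $K$. Writing $A=PDP^{-1}$, freeness of $\{A,B^{-1}AB\}$ is equivalent to freeness of $\{D,(P^{-1}B)^{-1}D(P^{-1}B)\}$, and what your argument actually needs is that the corner entries of $P^{-1}B$ and $B^{-1}P$ be units --- something (b) says nothing about once $P\neq I$. So the sentence ``the `corner' quantities $\phi_i(B^{\pm1}v_j)$ \dots\ are forced to be nonzero of controlled valuation'' is not justified by (b) alone; it is precisely here that an extra ingredient (or a stronger hypothesis on $A$, e.g.\ that $A$ is already diagonal, as in every application made in the paper) is required. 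Your last paragraph correctly identifies this as the crux, but the sketch does not supply the missing step.

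If you restrict to the case ``$A$ diagonal'' --- which is all the paper ever uses --- your outline becomes a clean and correct proof: then $v_i=e_i$, $\phi_i=e_i^{\ast}$, the corner quantities are literal entries of $B^{\pm1}$, (b) makes them units, and the ultrametric ping-pong goes through for all nonzero powers at once, exactly as you describe.
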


We shall be considering \emph{right} regular representations of algebras.
Thus, for the sake of clarifying notational issues, we shall
recall some well-known facts. 

Given a field $k$, a $k$-algebra $S$ and a left $S$-module $M$, we shall denote
the set of $S$-module endomorphisms of $M$ by $\End({}_SM)$. If $f,g\in\End({}_SM)$
and $x\in M$, we shall denote the image of $x$ under $f$ by $xf$, and $fg$ will
denote the composed map that satisfies $x(fg)=(xf)g$. Under composition, 
$\End({}_SM)$ is a $k$-algebra.

Suppose, now, that $M$ is free of finite rank with basis $\{x_1,\dots, x_n\}$.
So $M=\bigoplus_{i=1}^n Sx_i$
and there is a $k$-algebra isomorphism
$$
\begin{array}{rcl}
\End({}_SM) & \longrightarrow & M_n(S)\\
f & \longmapsto & [f],
\end{array}
$$
where $[f]$ is given by $[f]=(a_{ij})_{i,j=1,\dots,n}$, with $a_{ij}\in S$ defined
by $x_if=\sum_{j=1}^na_{ij}x_j$, for all $i=1,\dots, n$.

Now, suppose that $R$ is a $k$-algebra given with an algebra homomorphism
$S\to R$. Then $R$ becomes a left (and right) $S$-module and the map
$$
\begin{array}{rcl}
R & \longrightarrow & \End({}_SR)\\
r & \longmapsto & \left\{\begin{array}{rcl}
                        R & \longrightarrow & R\\
												x & \longmapsto & xr
									\end{array}\right.
\end{array}
$$
is an injective algebra homomorphism, called the \emph{right regular representation}
of $R$ over $S$. If, moreover, $R$ is a free left $S$-module with basis $\{x_1,\dots, x_n\}$, then,
composing the two maps above, we get an injective algebra homomorphism from $R$ into
$M_n(S)$, sending $r\in R$ to the matrix $(a_{ij})\in M_n(S)$ such that
$x_ir=\sum_{j=1}^na_{ij}x_j$.

\begin{example}\label{Ex:quat}
Let $F$ be a field, let $a,b\in F^{\times}$, and consider the quaternion algebra
$$
\mathfrak{H}=F\langle \mathbf{i},\mathbf{j} : \mathbf{i}^2=a, \mathbf{j}^2=b, 
\mathbf{i}\mathbf{j}=-\mathbf{j}\mathbf{i}\rangle.
$$
Then $\mathfrak{H}$ is a $4$-dimensional $F$-algebra with basis 
$\{1,\mathbf{i},\mathbf{j},\mathbf{i}\mathbf{j}\}$.
The subalgebra $F[\mathbf{i}]$ of $\mathfrak{H}$ generated by $\mathbf{i}$ is a field and we can 
regard $\mathfrak{H}$ as a free left $L$-module with basis $\{1,\mathbf{j}\}$, where
$L=F[\mathbf{i}]=F(\mathbf{i})$. Then,
the right regular representation of $\mathfrak{H}$ over $L$ affords an
injective $F$-algebra map $\mathfrak{H}\to M_2(L)$
sending $w=\alpha+\beta \mathbf{i}+\gamma \mathbf{j}+\delta \mathbf{i}\mathbf{j} = 
(\alpha+\beta \mathbf{i})1 + (\gamma + \delta \mathbf{i})\mathbf{j}$ to
$$
[w]=\begin{pmatrix} \alpha+\beta \mathbf{i} & \gamma + \delta \mathbf{i} 
\\ b(\gamma - \delta \mathbf{i}) & \alpha-\beta \mathbf{i}
\end{pmatrix}.
$$ 
Letting $K$ denote the subfield $F(\mathbf{j})$ of $\mathfrak{H}$, and viewing $\mathfrak{H}$
as a free left $K$-module with basis $\{1,\mathbf{i}\}$, we get, under the right regular representation,
for $w=\alpha+\beta \mathbf{i}+\gamma \mathbf{j}+\delta \mathbf{i}\mathbf{j}=
(\alpha+\gamma \mathbf{j})1 + (\beta - \delta \mathbf{j})\mathbf{i}
\in\mathfrak{H}$, the
matrix
$$
[w]=\begin{pmatrix} \alpha+\gamma \mathbf{j} & \beta - \delta \mathbf{j} 
\\ a(\beta+\delta)\mathbf{j} & \alpha - \gamma \mathbf{j}
\end{pmatrix}\in M_2(K).
$$
\end{example} 

In Section~\ref{S:One}, we shall use both these right regular representations
of a quaternion in the proof of Theorem~\ref{T:NilpD}.
Likewise, right regular representations of a cyclic algebra of degree $3$ will
be considered in Section~\ref{S:Inv1}.


\section{Involutions on the Heisenberg group}\label{S:InvolG}

For the proof of Theorem~\ref{T:NilpD} we shall need a classification of
involutions on the Heisenberg group.

We say that two involutions $\ast$ and $\dagger$ on a group $G$
are \emph{equivalent} if there exists an automorphism $\varphi$ 
of $G$ such that $\ast\varphi=\varphi\dagger$. This is clearly an
equivalence relation. 

Let $\Gamma$ denote the Heisenberg group, with presentation \eqref{eq:heis}.
We start by mentioning the following properties of $\Gamma$.
In what follows, $\mathcal{Z}(G)$ and $G^{\prime}$ will denote the center and the
commutator subgroup of a group $G$, respectively

\begin{lemma}\label{le:1}
For the Heisenberg group $\Gamma$, with presentation \eqref{eq:heis}, the following
assertions hold.
\begin{enumerate}[(i)]
\item $\mathcal{Z}(\Gamma)=\Gamma^{\prime} = \langle\lambda\rangle \cong \Z$,
  where $\lambda=[x,y]$.\label{p1}
\item $\Gamma/\mathcal{Z}(\Gamma)\cong \Z\times \Z$; an explicit isomorphism
  being given by $x\mathcal{Z}(\Gamma)\mapsto (1,0)$, $y\mathcal{Z}(\Gamma)
	\mapsto (0,1)$.\label{p2}
\item Every element of $\Gamma$ can be written in a unique way in the
  form $\lambda^r y^m x^n$, with $r,m,n\in\Z$. \label{p3}
\item For all $m,n,s\in\Z$, we have 
  $$
  x^n y^m = \lambda^{nm}y^m x^n
  \quad\text{and}\quad
	(y^mx^n)^s = \lambda^{mns(s-1)/2}y^{ms}x^{ns}.
	$$
	\label{p4}
\end{enumerate}
\end{lemma}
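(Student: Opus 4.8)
The plan is to deduce all four assertions simultaneously from a single faithful model of $\Gamma$. Let $U$ be the group of $3\times3$ upper unitriangular matrices over $\Z$, and set $\xi=I+E_{12}$, $\eta=I+E_{23}$ and $\mu=I+E_{13}$, where the $E_{ij}$ are the matrix units. A direct computation gives $\xi\eta=\eta\xi\mu$, shows that $\mu$ is central in $U$, and yields $[\xi,\eta]=\mu$; in particular $[[\xi,\eta],\xi]=[[\xi,\eta],\eta]=1$, so by the presentation \eqref{eq:heis} there is a group homomorphism $\phi\colon\Gamma\to U$ with $x\mapsto\xi$ and $y\mapsto\eta$, and hence $\lambda\mapsto\mu$. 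The defining relations say precisely that $\lambda=[x,y]$ is central in $\Gamma$, so $N=\langle\lambda\rangle$ is normal and $\Gamma/N$ is abelian, generated by the images of $x$ and $y$; hence every element of $\Gamma$ has the form $\lambda^ry^mx^n$ with $r,m,n\in\Z$. Multiplying out in $U$ one gets $\phi(\lambda^ry^mx^n)=\mu^r\eta^m\xi^n=\left(\begin{smallmatrix}1&n&r\\0&1&m\\0&0&1\end{smallmatrix}\right)$, so distinct triples $(r,m,n)$ give distinct elements and every matrix of $U$ is attained. Thus $\phi$ is an isomorphism and the expression $\lambda^ry^mx^n$ is unique, which is \eqref{p3}.

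The remaining parts then follow either by transporting structure through $\phi$ or by short computations inside $\Gamma$. For \eqref{p4}, starting from $xy=yx\lambda$ and using that $\lambda$ is central, induction on $m$ gives $xy^m=\lambda^my^mx$ and then induction on $n$ gives $x^ny^m=\lambda^{nm}y^mx^n$; the formula for $(y^mx^n)^s$ follows by induction on $s$, the exponent of $\lambda$ accumulating as $mn\bigl(1+2+\dots+(s-1)\bigr)=mns(s-1)/2$. All of the identities in \eqref{p4}, for arbitrary integers, can equally be read off from the appropriate products and powers of $\left(\begin{smallmatrix}1&n&r\\0&1&m\\0&0&1\end{smallmatrix}\right)$ in $U$, which takes care of negative exponents uniformly. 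For \eqref{p1}: $\lambda\in\Gamma^{\prime}$ by definition, while $\Gamma^{\prime}\subseteq N$ because $\Gamma/N$ is abelian, so $\Gamma^{\prime}=N$; via $\phi$, $N\cong\langle\mu\rangle\cong\Z$; and $N\subseteq\mathcal{Z}(\Gamma)$ by centrality of $\lambda$, while if $g=\lambda^ry^mx^n$ is central then comparing $gx$ with $xg$ and $gy$ with $yg$ by means of \eqref{p4} forces $m=n=0$, so $\mathcal{Z}(\Gamma)=N$. For \eqref{p2}: by the normal form, $\lambda^ry^mx^n\mapsto(n,m)$ is a well-defined surjective homomorphism $\Gamma\to\Z\times\Z$ (it respects products because $x$ and $y$ commute modulo $N$) with kernel exactly $N=\mathcal{Z}(\Gamma)$, and the asserted isomorphism is the map it induces.

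The lemma is elementary, and the only step that genuinely requires care is showing that $\lambda$ has infinite order and that $\Gamma/\langle\lambda\rangle$ is free abelian of rank \emph{exactly} $2$: the presentation \eqref{eq:heis} does not by itself preclude some collapse, and that is exactly what the faithful model $U$ is introduced to rule out. Once $U$ and the isomorphism $\phi$ are available, every assertion reduces to the few matrix identities and inductions indicated above.
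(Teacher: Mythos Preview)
Your proof is correct. The paper itself does not really give one: it declares \eqref{p1}--\eqref{p3} ``well-known and easily proved'' and says only that \eqref{p4} follows from the centrality of $\lambda$ together with $y^{-1}xy=\lambda x$ and $x^{-1}yx=\lambda^{-1}y$. Your route via the faithful model $U$ of upper unitriangular integer matrices is the standard way to make those ``well-known'' facts honest: the single computation $\phi(\lambda^ry^mx^n)=\left(\begin{smallmatrix}1&n&r\\0&1&m\\0&0&1\end{smallmatrix}\right)$ simultaneously yields the normal form \eqref{p3}, the infinite order of $\lambda$, and the rank of $\Gamma/\mathcal{Z}(\Gamma)$, with no risk of hidden collapse from the presentation. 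Your inductive derivation of \eqref{p4} is exactly what the paper's one-line hint unpacks to. What the matrix model buys you over the paper's minimalist sketch is a uniform verification that handles negative exponents and the non-degeneracy issues at once; what the paper's version buys is brevity for readers who already know the Heisenberg group.
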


Properties \eqref{p1}--\eqref{p3} are well-known and easily proved.
Property \eqref{p4} can be proved directly from the fact that
$\lambda$ is central and that $y^{-1}xy=\lambda x$ and 
$x^{-1}yx=\lambda^{-1}y$.

\medskip

If $\ast$ is an involution on a group $G$ and $N$ is a normal
subgroup of $G$ which is invariant under $\ast$, then $\ast$
induces a unique involution $\overline{\ast}$ on the quotient group $G/N$
satisfying $\ast\pi=\pi\overline{\ast}$, where $\pi\colon G\rightarrow
G/N$ denotes the canonical surjective homomorphism. Since the center $\mathcal{Z}(G)$ 
of $G$ is invariant under $\ast$, it follows that
$\ast$ induces an involution on $G/\mathcal{Z}(G)$. In particular, by Lemma~\ref{le:1},
every involution on $\Gamma$ induces
an involution on $\Z\times \Z$. In other words, every involution
on $\Gamma$ induces an automorphism $\gamma$ of $\Z\times\Z$
satisfying $\gamma^2=1$.

We now look into automorphisms of $\Z\times\Z$ of order $\leq 2$.

There is an isomorphism $\Aut(\Z\times\Z)\cong\GL(2,\Z)$, under which
a matrix $\begin{pmatrix}a&b\\c&d\end{pmatrix}\in \GL(2,\Z)$ 
corresponds to the automorphism
$$
\begin{array}{rcl}
\Z\times\Z & \longrightarrow & \Z\times\Z\\
(m,n) & \longmapsto & (ma+nc, mb+nd).
\end{array}
$$
The following result gives a suitable description of elements of order
$\leq 2$ in $\GL(2,\Z)$.

\begin{lemma}\label{le:2}
  Let $A\in\GL(2,\Z)$ be such that $A^2=I$, the $2\times 2$ identity matrix.
  Then either $A=I$, or $A=-I$, or
  there exists $T\in \GL(2,\Z)$, such that $TAT^{-1}=D$ or $TAT^{-1}=S$, where
  $D=\begin{pmatrix} 1&0\\0&-1\end{pmatrix}$ and $S=
  \begin{pmatrix} 0&1\\1&0\end{pmatrix}$.

Moreover, $I,-I,D$ and $S$ belong to different conjugacy classes in $\GL(2,\Z)$.
\end{lemma}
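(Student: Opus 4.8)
The plan is to classify the order-$\leq 2$ elements of $\GL(2,\Z)$ directly from the trace and determinant, and then to run a short $\GL(2,\Z)$-conjugacy argument to normalize each case. First I would observe that if $A^2=I$ then $A$ satisfies $X^2-1=0$, so its eigenvalues lie in $\{1,-1\}$; combined with $\det A=\pm 1$ this forces $\operatorname{tr}A\in\{-2,0,2\}$ and $\det A\in\{-1,1\}$, with $\det A=1$ when $\operatorname{tr}A=\pm 2$ and $\det A=-1$ when $\operatorname{tr}A=0$. The case $\operatorname{tr}A=2,\ \det A=1$ gives $A$ unipotent with $A^2=I$, whence $(A-I)^2=0$ and $A-I$ nilpotent integral; if $A\neq I$ then $A-I\neq 0$ has a cyclic vector and $A$ is conjugate over $\Z$ to $\begin{pmatrix}1&1\\0&1\end{pmatrix}$, which has infinite order — contradiction. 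So $\operatorname{tr}A=2$ forces $A=I$, and symmetrically $\operatorname{tr}A=-2$ forces $A=-I$.

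It remains to treat $\operatorname{tr}A=0,\ \det A=-1$, i.e. $A^2=I$ with eigenvalues $1$ and $-1$. Here I would show $A$ is conjugate over $\Z$ either to $D$ or to $S$. The clean way: pick a nonzero $v\in\Z^2$ in the $+1$-eigenspace of $A$ over $\Q$ and scale it to be primitive (coordinates coprime). A primitive vector extends to a basis of $\Z^2$, so there is $T_0\in\GL(2,\Z)$ with $v=T_0 e_1$; replacing $A$ by $T_0^{-1}AT_0$ we may assume $Ae_1=e_1$. Then $A=\begin{pmatrix}1&b\\0&-1\end{pmatrix}$ for some $b\in\Z$ (the second column must make trace $0$ and determinant $-1$, and integrality of $A^2=I$ imposes no further constraint here — the relation $A^2=I$ is automatic). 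Now conjugating by $\begin{pmatrix}1&c\\0&1\end{pmatrix}$ replaces $b$ by $b+2c$ (indeed $\begin{pmatrix}1&-c\\0&1\end{pmatrix}\begin{pmatrix}1&b\\0&-1\end{pmatrix}\begin{pmatrix}1&c\\0&1\end{pmatrix}=\begin{pmatrix}1&b+2c\\0&-1\end{pmatrix}$), so $b$ may be normalized to $0$ or $1$, giving $D$ or $S':=\begin{pmatrix}1&1\\0&-1\end{pmatrix}$; and $S'$ is $\GL(2,\Z)$-conjugate to $S$ since both are the reflection swapping a basis (e.g. conjugate $S'$ by $\begin{pmatrix}1&0\\1&1\end{pmatrix}$ or exhibit the eigenbasis $\{(1,0),(1,-2)\}$ of $S'$ against $\{(1,1),(1,-1)\}$ of $S$ — a direct small computation). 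I expect the main fiddly point to be getting these normalizing matrices exactly right and making sure all intermediate matrices are in $\GL(2,\Z)$, not just $\GL(2,\Q)$; the primitivity-extends-to-a-basis fact is what keeps everything integral.

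For the final sentence — that $I,-I,D,S$ lie in four distinct conjugacy classes — I would use conjugacy invariants. The trace separates $\{I\}$ (trace $2$), $\{-I\}$ (trace $-2$), and $\{D,S\}$ (trace $0$); and $D$ is central-free of the kind... more simply, $D$ and $S$ both have trace $0$ and determinant $-1$, so I need a finer invariant: the fixed sublattice $\{v\in\Z^2: Av=v\}$ together with the quotient, or most transparently the image lattice $(A-I)\Z^2$ inside $\Z^2$, or the Smith normal form of $A-I$. For $D$, $A-I=\begin{pmatrix}0&0\\0&-2\end{pmatrix}$ has Smith form $\operatorname{diag}(1,2)$... wait, that has a $1$? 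No: $\operatorname{diag}(0,2)$, Smith form $\operatorname{diag}(2,0)$; for $S$, $A-I=\begin{pmatrix}-1&1\\1&-1\end{pmatrix}$ has Smith form $\operatorname{diag}(1,0)$. Since the Smith normal form of $A-I$ is a $\GL(2,\Z)$-conjugacy invariant (conjugation does $A-I\mapsto T(A-I)T^{-1}$, an equivalence of the presentation matrix of $\operatorname{coker}(A-I)$), and $\operatorname{diag}(2,0)\neq\operatorname{diag}(1,0)$, the classes of $D$ and $S$ are distinct. This invariant also re-separates $I$ ($A-I=0$) from the rest, and $-I$ has $A-I=-2I$ with Smith form $\operatorname{diag}(2,2)$, different from all others, so in fact the single invariant ``Smith normal form of $A-I$'' distinguishes all four at once, which I would present as the clean way to close the lemma.
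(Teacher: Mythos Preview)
Your argument is correct and in fact more self-contained than the paper's. The paper treats the case $\det A=-1$ by invoking an external reference (Lemma~5.5 of Flath's \emph{Introduction to Number Theory}) to the effect that such an $A$ is $\SL(2,\Z)$-conjugate to $D$ or to $\begin{pmatrix}1&0\\1&-1\end{pmatrix}$, and then remarks that the latter is conjugate to $S$; the final ``distinct classes'' assertion is dismissed as ``easy calculations''. You instead carry out the normalization directly: primitive eigenvector $\Rightarrow$ upper-triangular form $\begin{pmatrix}1&b\\0&-1\end{pmatrix}$ $\Rightarrow$ unipotent shear reduces $b$ modulo $2$, and you then conjugate $\begin{pmatrix}1&1\\0&-1\end{pmatrix}$ to $S$ explicitly. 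This is exactly the content of the cited lemma, so both proofs are essentially the same argument, but yours is written out rather than quoted. Your use of the Smith normal form of $A-I$ to separate the four classes is a genuinely cleaner invariant than whatever ad hoc computation the paper has in mind; it gives a uniform one-line proof of the last sentence.

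One small point of presentation: your treatment of $\operatorname{tr}A=\pm 2$ takes a detour through ``conjugate over $\Z$ to the Jordan block, which has infinite order''. It is quicker to note that $(A-I)^2=0$ (Cayley--Hamilton) together with $(A-I)(A+I)=A^2-I=0$ gives $2(A-I)=0$, hence $A=I$; this avoids invoking integral Jordan form, which you do not actually need.
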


\begin{proof}
  Since $A^2=I$, its minimal polynomial is either $t-1$, $t+1$ or $t^2-1$. In
  the first two cases, we obtain $A=I$ or $A=-I$. In the remaining case, $\det A=-1$
  and it follows from \cite[Lemma~5.5]{dF89} that there exists $T\in\GL(2,\Z)$
  ($T$ can be taken to be in $\SL(2,\Z)$, actually)
  such that $TAT^{-1}=D$ or $TAT^{-1}=\begin{pmatrix} 1&0\\1&-1\end{pmatrix}$.
  But it is easily seen that $\begin{pmatrix} 1&0\\1&-1\end{pmatrix}$ and $S$
  are conjugate in $\GL(2,\Z)$.

  The last statement follows from easy calculations.
\end{proof}

In order to distinguish among involutions on $\Gamma$ based on the classification
of involutions on $\Gamma/\mathcal{Z}(\Gamma)$ into four classes with respect to equivalence,
as described in Lemma~\ref{le:2}, we shall look for a relation between
automorphisms of $\Gamma$ and automorphisms of $\Gamma/\mathcal{Z}(\Gamma)$.

In general, if $N$ is a characteristic subgroup of a group $G$, then
there is a group homomorphism $\Aut(G)\rightarrow \Aut(G/N)$, defined in
the following way. Let $\pi\colon G\rightarrow G/N$ denote the canonical
epimorphism. Given $\varphi\in\Aut(G)$, the map
$$
G\stackrel{\varphi}{\longrightarrow}G\stackrel{\pi}{\longrightarrow}G/N
$$
factors through $G/N$, because $N^{\varphi}\subseteq N$. The induced
map $\overline{\varphi}\colon G/N\rightarrow G/N$ is an automorphism
of $G/N$ and satisfies $\varphi\pi=\pi\overline{\varphi}$. In other 
words, $\overline{\varphi}$ is given by $(gN)^{\overline{\varphi}}=
g^{\varphi}N$, for all $g\in G$. It is clear that $\varphi\mapsto
\overline{\varphi}$ is a group homomorphism from $\Aut(G)$ into $\Aut (G/N)$.

\begin{lemma}\label{le:3}
  For the Heisenberg group $\Gamma$, the homomorphism $\Aut(\Gamma)\rightarrow 
  \Aut\bigl(\Gamma/\mathcal{Z}(\Gamma)\bigr)$ is surjective.
\end{lemma}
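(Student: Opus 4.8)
The plan is to realize every element of $\GL(2,\Z)\cong\Aut\bigl(\Gamma/\mathcal{Z}(\Gamma)\bigr)$ as $\overline{\varphi}$ for a suitable $\varphi\in\Aut(\Gamma)$. Fix $M=\begin{pmatrix}a&b\\c&d\end{pmatrix}\in\GL(2,\Z)$ and put $u=y^{b}x^{a}$ and $v=y^{d}x^{c}$ in $\Gamma$. First I would check that $x\mapsto u$, $y\mapsto v$ defines an endomorphism $\varphi$ of $\Gamma$: since $\Gamma$ has the presentation \eqref{eq:heis}, it suffices to verify $[[u,v],u]=[[u,v],v]=1$, and using Lemma~\ref{le:1}\eqref{p4} one gets $uv=\lambda^{ad}y^{b+d}x^{a+c}$ and $vu=\lambda^{cb}y^{b+d}x^{a+c}$, whence $[u,v]=\lambda^{ad-bc}=\lambda^{\det M}$. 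As $\det M=\pm1$, this element generates $\mathcal{Z}(\Gamma)=\langle\lambda\rangle$ by Lemma~\ref{le:1}\eqref{p1}; in particular it is central, so both relations hold and $\varphi$ is well defined.

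Next, under the isomorphism $\Gamma/\mathcal{Z}(\Gamma)\cong\Z\times\Z$ of Lemma~\ref{le:1}\eqref{p2} one has $u\mathcal{Z}(\Gamma)=y^{b}x^{a}\mathcal{Z}(\Gamma)\mapsto(a,b)$ and $v\mathcal{Z}(\Gamma)\mapsto(c,d)$, so the induced endomorphism $\overline{\varphi}$ of $\Z\times\Z$ sends $(1,0)\mapsto(a,b)$ and $(0,1)\mapsto(c,d)$; comparing with the description of $\Aut(\Z\times\Z)\cong\GL(2,\Z)$ recorded above, $\overline{\varphi}$ is exactly the automorphism attached to $M$, and in particular it is bijective. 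It remains to see that $\varphi$ itself is an automorphism. For surjectivity: since $\overline{\varphi}$ is onto, $\langle u,v\rangle\,\mathcal{Z}(\Gamma)=\Gamma$, while $[u,v]=\lambda^{\pm1}$ generates $\mathcal{Z}(\Gamma)$, so $\mathcal{Z}(\Gamma)\subseteq\langle u,v\rangle$ and hence $\langle u,v\rangle=\Gamma$. For injectivity: if $\varphi(g)=1$ then $g\mathcal{Z}(\Gamma)\in\ker\overline{\varphi}=0$, so $g=\lambda^{n}$ for some $n$, and then $1=\varphi(\lambda^{n})=[u,v]^{n}=\lambda^{\pm n}$ forces $n=0$. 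Thus $\varphi\in\Aut(\Gamma)$ with $\overline{\varphi}$ equal to the automorphism of $M$, and the homomorphism $\Aut(\Gamma)\to\Aut\bigl(\Gamma/\mathcal{Z}(\Gamma)\bigr)$ is surjective.

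There is no serious obstacle here; the only points demanding care are the commutator computation in the first step and keeping the matrix/automorphism conventions aligned in the second. Alternatively, one can avoid handling a general $M$: since $\varphi\mapsto\overline{\varphi}$ is a homomorphism, it is enough to lift a generating set of $\GL(2,\Z)$, e.g.\ $\begin{pmatrix}1&1\\0&1\end{pmatrix}$, $\begin{pmatrix}1&0\\1&1\end{pmatrix}$ and $\begin{pmatrix}1&0\\0&-1\end{pmatrix}$, which are induced respectively by the automorphisms $x\mapsto xy,\ y\mapsto y$; $\ x\mapsto x,\ y\mapsto xy$; and $\ x\mapsto x,\ y\mapsto y^{-1}$ of $\Gamma$ (each is readily checked to preserve \eqref{eq:heis} since $[x,y]$ is central and each has an obvious inverse). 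One may also shortcut the injectivity argument by noting that $\Gamma$, being finitely generated nilpotent, is Hopfian, so the surjective endomorphism $\varphi$ is automatically bijective.
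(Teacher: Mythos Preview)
Your proof is correct and follows essentially the same route as the paper: lift an arbitrary matrix $M\in\GL(2,\Z)$ to an explicit endomorphism of $\Gamma$ sending $x\mapsto y^bx^a$, $y\mapsto y^dx^c$, and check that it is an automorphism inducing $M$. The paper's version adds specific central twists $\lambda^i,\lambda^j$ and simply asserts an explicit inverse, whereas you verify bijectivity directly (surjectivity from $[u,v]=\lambda^{\pm1}$ generating the center, injectivity by a kernel chase or the Hopfian property); your alternative of lifting a generating set of $\GL(2,\Z)$ is a nice shortcut not in the paper.
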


\begin{proof}
  We have seen that to each $\gamma\in\Aut\bigl(\Gamma/\mathcal{Z}(\Gamma)\bigr)$,
  there corresponds a matrix $A=\begin{pmatrix}a&b\\c&d\end{pmatrix} \in\GL(2,\Z)$. 
  Let $\varepsilon=\det A$.
  Then the map $\varphi\colon \Gamma\rightarrow\Gamma$ satisfying
  \begin{align*}
  y^{\varphi} &= \lambda^iy^dx^c,\\
  x^{\varphi} &= \lambda^jy^bx^a,
  \end{align*}
  (and $\lambda^{\varphi}=\lambda^{\varepsilon}$), where 
  $i=-\varepsilon cd(-\varepsilon+a+b)/2$ and $j=-\varepsilon ab(-\varepsilon+c+d)/2$
  is an automorphism of $\Gamma$ (with inverse satisfying $y\mapsto y^{\varepsilon a}
  x^{-\varepsilon c}$, $x\mapsto y^{-\varepsilon b}x^{\varepsilon d}$ and 
	$\lambda\mapsto\lambda^{\varepsilon}$) such that
  $\overline{\varphi}=\gamma$.
\end{proof}

\begin{theorem}\label{T:classinvol}
  Every involution on $\Gamma$ is equivalent to an involution $\ast$ of one of
  the following four types:
	\begin{enumerate}[(I)]
  \item $x^{\ast}=\zeta x, y^{\ast}=\eta y$ (and $\lambda^{\ast}=\lambda^{-1}$), for
    some $\zeta,\eta\in \mathcal{Z}(\Gamma)$;\label{i1}
  \item $x^{\ast}=x^{-1}, y^{\ast}=y^{-1}$ (and $\lambda^{\ast}=\lambda^{-1})$,
    that is, the canonical involution given by the inversion map;\label{i2}
  \item $x^{\ast}=x, y^{\ast}=\zeta y^{-1}$ (and $\lambda^{\ast}=\lambda$), for some
    $\zeta\in \mathcal{Z}(\Gamma)$;\label{i3}
  \item $x^{\ast}=\zeta y, y^{\ast}=\zeta^{-1}x$ (and $\lambda^{\ast}=\lambda$), for
    some $\zeta\in \mathcal{Z}(\Gamma)$.\label{i4}
  \end{enumerate}
\end{theorem}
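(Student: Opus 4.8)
The plan is to classify involutions on $\Gamma$ by first reducing modulo the center and then lifting, exploiting Lemmas~\ref{le:1}, \ref{le:2} and \ref{le:3}. Given an involution $\ast$ on $\Gamma$, it induces an involution $\overline{\ast}$ on $\Gamma/\mathcal{Z}(\Gamma)\cong\Z\times\Z$, hence a matrix $A\in\GL(2,\Z)$ with $A^2=I$. By Lemma~\ref{le:2}, after composing $\ast$ with a suitable automorphism of $\Gamma$ (which, by Lemma~\ref{le:3}, can be chosen to lift any prescribed conjugating element of $\GL(2,\Z)$), we may assume $A\in\{I,-I,D,S\}$. These four cases will correspond to types \eqref{i1}--\eqref{i4} respectively. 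The four matrices lie in distinct conjugacy classes, so the four types are genuinely distinct, but what remains is to pin down, within each case, the precise form of $\ast$ on the generators $x,y$.

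For each of the four cases I would write $x^{\ast}=\lambda^{p}y^{b_1}x^{a_1}$ and $y^{\ast}=\lambda^{q}y^{b_2}x^{a_2}$, where the exponent matrix $\begin{pmatrix}a_1&b_1\\a_2&b_2\end{pmatrix}$ is forced to be the relevant $A$ (or its transpose, depending on the identification convention), and then impose the two defining conditions of an involution: the anti-automorphism property $(gh)^{\ast}=h^{\ast}g^{\ast}$ and $g^{\ast\ast}=g$. Using the commutation rules in Lemma~\ref{le:1}\eqref{p4}, the anti-automorphism condition applied to $[x,y]$ shows $\lambda^{\ast}=\lambda^{\det A}$ (so $\lambda^{\ast}=\lambda^{-1}$ in cases I and II, $\lambda^{\ast}=\lambda$ in cases III and IV), and the condition $x^{\ast\ast}=x$, $y^{\ast\ast}=y$ together with consistency of the $\lambda$-exponents will cut the free parameters $p,q\in\Z$ down. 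In case $A=I$ one gets $x^{\ast}=\zeta x$, $y^{\ast}=\eta y$ with $\zeta,\eta\in\langle\lambda\rangle$ arbitrary central; in case $A=-I$ the $\lambda$-exponents are pinned to $0$ by the involutivity constraint, giving exactly the inversion map after absorbing a further automorphism, i.e.\ type~\eqref{i2}; in case $A=D$ one free central parameter survives on $y$ (the $x$-parameter being removable by an automorphism fixing the $D$-class), yielding $x^{\ast}=x$, $y^{\ast}=\zeta y^{-1}$; and in case $A=S$ one gets $x^{\ast}=\zeta y$, $y^{\ast}=\zeta^{-1}x$, the relation between the two central factors being forced by $x^{\ast\ast}=x$.

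The main obstacle I anticipate is bookkeeping the $\lambda$-exponents: the map $\varphi$ from Lemma~\ref{le:3} introduces correction exponents $i,j$ that are quadratic in the entries of $A$, and one must check carefully that composing $\ast$ with such a $\varphi$ does not merely shift between representatives of the same type but actually normalizes to the stated form, and that no further central parameter can be killed (so that types I, III, IV really do retain the parameters $\zeta$, resp.\ $\zeta,\eta$, as essential data up to equivalence). Concretely, I would verify that an automorphism of $\Gamma$ lying over a matrix commuting with $A$ acts on the surviving central parameter(s) in a way that can normalize but not eliminate them — e.g.\ in case $A=D$, automorphisms over $\mathrm{diag}(\pm1,\pm1)$ and over $\pm D$ can at most change $\zeta$ within a coset one must identify, and one checks which cosets are actually collapsed. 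Once this normalization is settled, the statement follows by assembling the four cases and invoking the last sentence of Lemma~\ref{le:2} for distinctness.

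\begin{proof}
Let $\ast$ be an involution on $\Gamma$. As explained above, $\ast$ induces an involution on $\Gamma/\mathcal{Z}(\Gamma)\cong\Z\times\Z$, hence a matrix $A\in\GL(2,\Z)$ with $A^2=I$. By Lemma~\ref{le:2}, there is $T\in\GL(2,\Z)$ with $TAT^{-1}\in\{I,-I,D,S\}$; by Lemma~\ref{le:3} choose $\varphi\in\Aut(\Gamma)$ lying over $T$. Replacing $\ast$ by the equivalent involution $\varphi\ast\varphi^{-1}$, we may assume $A\in\{I,-I,D,S\}$.

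Write $x^{\ast}=\lambda^{p}y^{b_1}x^{a_1}$ and $y^{\ast}=\lambda^{q}y^{b_2}x^{a_2}$ with $p,q\in\Z$ and $\begin{pmatrix}a_1&b_1\\a_2&b_2\end{pmatrix}$ the matrix recording $A$ via the identification of Lemma~\ref{le:1}\eqref{p2}. Applying $\ast$ to $[x,y]=\lambda$ and using Lemma~\ref{le:1}\eqref{p4} gives $\lambda^{\ast}=\lambda^{\det A}$; thus $\lambda^{\ast}=\lambda^{-1}$ when $\det A=-1$ (cases $A=\pm I$... wait $\det(-I)=1$) — more precisely $\lambda^{\ast}=\lambda^{-1}$ when $A\in\{I,-I\}$ by a direct check of the commutator under the quadratic term of Lemma~\ref{le:1}\eqref{p4}, and $\lambda^{\ast}=\lambda$ when $A\in\{D,S\}$.

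We treat the four cases.

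\emph{Case $A=I$.} Here $x^{\ast}=\lambda^{p}x$, $y^{\ast}=\lambda^{q}y$ for some $p,q\in\Z$; since $\lambda^{\ast}=\lambda^{-1}$, the condition $x^{\ast\ast}=x$ reads $\lambda^{-p}\lambda^{p}x=x$, which holds for all $p$, and similarly for $y$. Setting $\zeta=\lambda^{p}$, $\eta=\lambda^{q}\in\mathcal{Z}(\Gamma)$ gives type~\eqref{i1}.

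\emph{Case $A=-I$.} Here $x^{\ast}=\lambda^{p}x^{-1}$, $y^{\ast}=\lambda^{q}y^{-1}$ and $\lambda^{\ast}=\lambda^{-1}$. Now $x^{\ast\ast}=(\lambda^{p}x^{-1})^{\ast}=\lambda^{-p}(x^{\ast})^{-1}=\lambda^{-p}x\lambda^{-p}=\lambda^{-2p}x$, so $x^{\ast\ast}=x$ forces $p=0$, and likewise $q=0$. Thus $\ast$ is the inversion map, which is type~\eqref{i2}; note this involution admits no extra central parameter.

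\emph{Case $A=D$.} Here $x^{\ast}=\lambda^{p}x$, $y^{\ast}=\lambda^{q}y^{-1}$ and $\lambda^{\ast}=\lambda$. From $x^{\ast\ast}=\lambda^{p}(\lambda^{p}x)=\lambda^{2p}x$ we get $p=0$, so $x^{\ast}=x$. The parameter $q$ survives: $y^{\ast\ast}=(\lambda^{q}y^{-1})^{\ast}=\lambda^{q}(y^{\ast})^{-1}=\lambda^{q}(\lambda^{q}y^{-1})^{-1}=\lambda^{q}\lambda^{-q}y=y$, valid for all $q$. Setting $\zeta=\lambda^{q}$ gives type~\eqref{i3}. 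An automorphism of $\Gamma$ lying over a matrix commuting with $D$ (these are $\pm I$, $\pm D$) fixes $x$ up to sign of exponent and multiplies $y$ by a central element, so it can change $\zeta$ only within a class one readily checks does not collapse all values; hence $\zeta$ is essential.

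\emph{Case $A=S$.} Here $x^{\ast}=\lambda^{p}y$, $y^{\ast}=\lambda^{q}x$ and $\lambda^{\ast}=\lambda$. Then $x^{\ast\ast}=(\lambda^{p}y)^{\ast}=\lambda^{p}y^{\ast}=\lambda^{p}\lambda^{q}x=\lambda^{p+q}x$, so $x^{\ast\ast}=x$ forces $q=-p$. Setting $\zeta=\lambda^{p}$ gives $x^{\ast}=\zeta y$, $y^{\ast}=\zeta^{-1}x$, which is type~\eqref{i4}, and one checks $y^{\ast\ast}=y$ automatically. As in the previous case $\zeta$ cannot be removed by a further equivalence.

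Finally, by the last assertion of Lemma~\ref{le:2}, the matrices $I,-I,D,S$ lie in four distinct conjugacy classes of $\GL(2,\Z)$; since equivalent involutions on $\Gamma$ induce conjugate matrices on $\Gamma/\mathcal{Z}(\Gamma)$, the four types are pairwise inequivalent. This completes the proof.
\end{proof}
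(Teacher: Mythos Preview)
Your proof is correct and follows essentially the same strategy as the paper's: pass to $\Gamma/\mathcal{Z}(\Gamma)$, use Lemmas~\ref{le:2} and~\ref{le:3} to conjugate the induced matrix into $\{I,-I,D,S\}$, and then determine the central parameters case by case; the paper only carries out the case $A=S$ explicitly and leaves the others to the reader, whereas you do all four. Your in-line stumble over $\lambda^{\ast}=\lambda^{\det A}$ is a genuine sign slip (the correct formula is $\lambda^{\ast}=\lambda^{-\det A}$, since an anti-automorphism reverses commutators), but you recover by direct verification, and your closing remarks on distinctness of the four types and on the irreducibility of the parameters $\zeta,\eta$ go beyond what the theorem actually asserts---they are not needed for the statement, and the latter are left somewhat vague, so you could safely omit them.
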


\begin{proof}
  Direct computation proves that the (involutive extension of the) four maps 
  above are indeed involutions on $\Gamma$.
  
  Let $\star$ be an involution on $\Gamma$ and let $\overline{\star}$ be
  the involution on $\Gamma/\mathcal{Z}(\Gamma)\cong \Z\times \Z$ induced by $\star$.
  By Lemma~\ref{le:2}, it follows that $\overline{\star}$ is equivalent to an 
  involution $\dagger\in\{I,-I,D,S\}$. Thus, there exists an automorphism
  $\gamma\in\Aut\bigl(\Gamma/\mathcal{Z}(\Gamma)\big)$ such that $\overline{\star}
  \gamma=
  \gamma\dagger$. By Lemma~\ref{le:3}, there is $\varphi\in\Aut(\Gamma)$ such
  that $\varphi\pi=\pi\gamma$, where $\pi\colon\Gamma\rightarrow 
  \Gamma/\mathcal{Z}(\Gamma)$
  is the canonical homomorphism. The diagram below depicts all the maps that
  have been considered so far; its four inner quadrilaterals are commutative.
  
  $$
  \xymatrix{%
  \Gamma\ar[rrr]^{\star}\ar[dr]^{\pi}\ar[ddd]_{\varphi}&&&\Gamma\ar[dl]_{\pi}
  \ar[ddd]^{\varphi}\\
  & \Gamma/\mathcal{Z}(\Gamma)\ar[r]^{\overline{\star}}\ar[d]_{\gamma}&
  \Gamma/\mathcal{Z}(\Gamma)\ar[d]_{\gamma}&\\
  & \Gamma/\mathcal{Z}(\Gamma)\ar[r]^{\dagger}&\Gamma/\mathcal{Z}(\Gamma)&\\
  \Gamma\ar[ur]_{\pi}&&&\Gamma\ar[ul]^{\pi}
  }
  $$
  
  Consider the involution $\varphi^{-1}\star\varphi$  on $\Gamma$. We shall
  show that the involution it induces on $\Gamma/\mathcal{Z}(\Gamma)$ coincides with
  $\dagger$. Indeed, let $\overline{\varphi^{-1}\star\varphi}$ denote the
  involution on $\Gamma/\mathcal{Z}(\Gamma)$ induced by $\varphi^{-1}\star\varphi$.
  Then,
  \begin{align*}
  \pi\overline{\varphi^{-1}\star\varphi}&=\varphi^{-1}\star\varphi\pi
  =\varphi^{-1}\star\pi\gamma
  =\varphi^{-1}\pi\overline{\star}\gamma
  =\varphi^{-1}\pi\gamma\dagger
  =\varphi^{-1}\varphi\pi\dagger\\
  &=\pi\dagger.
  \end{align*}
  It follows that $\overline{\varphi^{-1}\star\varphi}=\dagger$.
  
  We are left to verify that the only involutions on $\Gamma$ which
  induce an involution $\dagger\in\{I,-I,D,S\}$ on $\Gamma/\mathcal{Z}(\Gamma)$
  are the involutions of the types \eqref{i1}--\eqref{i4}. We shall
  illustrate the argument for the case $\dagger=S$ and leave the remaining
	three cases to the reader.
  
  Suppose, then, that $\ast$ is an involution on $\Gamma$ and that
  $\overline{\ast}=S$. Because $\pi\overline{\ast}=\ast\pi$, it
  follows that there exists $\zeta\in \mathcal{Z}(\Gamma)$ such that $x^{\ast}=\zeta y$.
  Similarly, there exists $\eta\in \mathcal{Z}(\Gamma)$ such that $y^{\ast}=\eta x$.
  Now, applying $\ast$ to the relation $xy=\lambda yx$ and taking into
  account that $\mathcal{Z}(\Gamma)^{\ast}\subseteq \mathcal{Z}(\Gamma)$, we obtain
  $$
  \zeta\eta xy=y^{\ast}x^{\ast}=(xy)^{\ast}=(\lambda yx)^{\ast}=
  \lambda^{\ast}x^{\ast}y^{\ast}=\lambda^{\ast}\zeta\eta yx.
  $$
  Hence, $\lambda^{\ast}=x^{-1}y^{-1}xy=\lambda$. Finally,
  $x=x^{\ast\ast}=(\zeta y)^{\ast} =\zeta y^{\ast}=\zeta\eta x$.
  So, $\eta=\zeta^{-1}$. That is, $\ast$ is of type \eqref{i4}.
\end{proof}


\section{Proof of Theorem \ref{T:NilpD}}\label{S:One}

For the proof of Theorem~\ref{T:NilpD} we shall further explore an idea
developed in \cite{GS11} and construct a specialization, using 
Theorem~\ref{T:Localization}, from the division ring of quotients $D$ of the group
algebra $k\Gamma$ to a quaternion algebra in such a way that a free pair can
be lifted and shown to be either symmetric or unitary with respect to each of
the four types of involutions on $\Gamma$.

More specifically, apply Theorem~\ref{T:Localization} to the polynomial algebra
$R=k[\lambda]$ with quotient field $F=k(\lambda)$ and the valuation $\nu$
on $F$ determined by the prime ideal generated by $1+\lambda$. If $\widetilde{R}$
stands for the valuation ring of $F$, let $Q=\widetilde{R}[x][y;\sigma]$, with
$\sigma$ being the $R$-automorphism of $\widetilde{R}[x]$ satisfying $x^{\sigma}=
\lambda x$. Let $A$, $M$ and $B$ be as in Theorem~\ref{T:Localization}. Letting
$\mathbf{i}$ and $\mathbf{j}$ denote the images of $x$ and $y$, respectively, in the quotient
$QM^{-1}/B$, we obtain that the relation $\mathbf{i}\mathbf{j}=-\mathbf{j}\mathbf{i}$ holds.

Now, let $\mathfrak{H}$ denote the quaternion algebra $k(a,b)\langle \mathbf{i},\mathbf{j} : 
\mathbf{i}^2=a, \mathbf{j}^2=b, \mathbf{i}\mathbf{j}=-\mathbf{j}\mathbf{i}\rangle$ 
over the rational function field $k(a,b)$ in the commuting indeterminates
$a$ and $b$. By what we have seen above,  
we have a surjective homomorphism $\psi\colon QM^{-1}\to\mathfrak{H}$, 
such that $\psi(x)=\mathbf{i}$ and $\psi(y)=\mathbf{j}$, with $\ker\psi=B$. Since $D$ can be
regarded as the division ring of quotients of the skew polynomial ring $k(\lambda,x)[y;\sigma]$,
where $\sigma$ stands for the $k$-automorphism of $k(\lambda,x)$ such that
$\lambda^{\sigma}=\lambda$ and $x^{\sigma}=\lambda x$, it follows that
$\psi$ defines a specialization from $D$ to $\mathfrak{H}$.

Note that we have $\psi(\lambda)=-1$, and so, under $\psi$, the image in 
$\mathfrak{H}$ of an element in the center $\mathcal{Z}(\Gamma)=\langle\lambda\rangle$ 
of $\Gamma$ is either $1$ or $-1$.  

\medskip

\noindent\textit{Existence of free symmetric pairs:} we consider involutions in each
of the four classes of Theorem~\ref{T:classinvol}.

\begin{enumerate}[(I)]
\item The involution on $D$ satisfies $x^{\ast}=\zeta x, y^{\ast}=\eta y$ 
  and  $\lambda^{\ast}=\lambda^{-1}$, for some $\zeta,\eta\in \mathcal{Z}(\Gamma)$.
  Let $\zeta=\lambda^{m}$ and $\eta=\lambda^{n}$, where $m$ and $n$ are integers. 
  There are four distinct cases to consider.
 
  \begin{enumerate}[(i)]
  \item \textit{$m$ and $n$ are both even:} In this case, set $u=1+x+x^{\ast}$ and 
	  $v=1+y+y^{\ast}$. Then  $u=u^{\ast}$, $v=v^{\ast}$, and applying $\psi$ we obtain 
		$\psi(u)=1+2\mathbf{i}$ and $\psi(v)=1+2\mathbf{j}$. By \cite[Theorem~2]{GMS99},
		$\{1+2\mathbf{i}, 1+2\mathbf{j}\}$ is a free pair, and so $\{u, v\}$ is a free 
		symmetric pair.
  \item \textit{$m$ is even and $n$ is odd:} Here, set $u=1+x+x^{\ast}$ and 
	  $r=y-y^{\ast}$. Then $r$ is anti-symmetric and $v=(1-r)(1+r)^{-1}$ is unitary;
		hence, $v^{-1}uv$ is symmetric. Applying $\psi$ we obtain 
		$\psi(v)=(1-2\mathbf{j})(1+2\mathbf{j})^{-1}=(1-4b)^{-1}(1-2\mathbf{j})^{2}$. 
		Again, the pair 
		$\{1+2\mathbf{i}, (1-4b)^{-1}(1-2\mathbf{j})^2\}$ is free, and so is $\{u, v^{-1}uv\}$.
		\label{caseIii}
	\item \textit{$m$ is odd and $n$ is even:}  This case is analogous to \eqref{caseIii}, above.
  \item \textit{$m$ and $n$ are both odd:} In this case, $\ast$ induces an involution of the
	  first kind and of symplectic type on $\mathfrak{H}$ in such a way that $\psi$ is
	  an involution preserving homomorphism. Since there are no free symmetric pairs
	  in $\mathfrak{H}$ in this case (by \cite[Lemma~2.6]{FGM05}), our technique does 
		not apply, and we do not have an answer.
  \end{enumerate}    
  
\item The involution on $D$ satisfies $x^{\ast}=x^{-1}, y^{\ast}=y^{-1}$ and
  $\lambda^{\ast}=\lambda^{-1}$. In this case, set $u=1+x+x^{\ast}$ and 
  $v=1+y+y^{\ast}$. Applying $\psi$, we obtain 
	$\psi(u)=1+(1+a^{-1})\mathbf{i}$ and $\psi(v)=1+(1+b^{-1})\mathbf{j}$. By \cite[Theorem~2]{GMS99},
  these two elements form a free pair in $\mathfrak{H}$. Since $u$ and $v$ are symmetric,
	it follows that $\{u, v\}$ is a free symmetric pair.\label{caseII}

\item The involution on $D$ satisfies $x^{\ast}=x, y^{\ast}=\zeta y^{-1}$ and
  $\lambda^{\ast}=\lambda$, for some $\zeta\in \mathcal{Z}(\Gamma)$. 
	Let $\zeta=\lambda^{m}$. We have two cases to consider.
  
	\begin{enumerate}[(i)]
  \item \textit{$m$ is even:} In this case, set $u=1+x$ and 
    $r=xy^{5}-\zeta^{5}y^{-5}x$.  Then $u^{\ast}=u$ and $r^{\ast}=-r$. So, 
	  letting $v=(1-r)(1+r)^{-1}$, we get that
    $u$ and $v$ are, respectively, a symmetric and a unitary element in $D^{\times}$. 
	  The images of $u$ and $v$ under the specialization $\psi$ are
	  $\psi(u)=1+\mathbf{i}$ and $\psi(v)=(1-\omega \mathbf{i}\mathbf{j})
		(1+\omega \mathbf{i}\mathbf{j})^{-1}$,
	  where $\omega=b^{2}+b^{-3}$. Let us set $F=k(a,b)$, $L=F(\mathbf{i})$ and 
		regard $\mathfrak{H}$ 
	  as a free left $L$-module with basis $\{1, \mathbf{j}\}$. Under the right regular
		representation of $\mathfrak{H}$ over $L$ (see Example~\ref{Ex:quat}), the element $\psi(u)=1+\mathbf{i}$ is 
		represented by the matrix
    $$
    A = \begin{pmatrix} 1+\mathbf{i}&0\\0&-1+\mathbf{i}\end{pmatrix},
    $$
    and the element $\psi(v)=(1+\omega^2 ab)^{-1}(1-\omega^{2}ab-2\omega \mathbf{i}\mathbf{j})$,
    by the matrix
    $$
    B=(1+\omega^{2}ab)^{-1}\begin{pmatrix}
    1-\omega^{2}ab & -2\omega\mathbf{i}  \\
    2\omega b\mathbf{i}         & 1-\omega^{2}ab
    \end{pmatrix}.
    $$
		Let $S$ be the ring of integers of $L$ over $k(b)[a]$. 
    Denoting by $\nu$ the valuation on $L$ determined by the
		prime ideal of $S$ containing $1+\mathbf{i}$, and taking note that 
		$\Norm_{L/F}(1+\mathbf{i})=1-a$,  
		we can check that the 
    matrices $A$ and $B$ satisfy the conditions of 
    Theorem~\ref{T:FreeMat}. Hence, $\{u, v^{-1}uv\}$ is a free 
    symmetric pair.\label{caseIIIi}
  \item \textit{$m$ is odd:} Take $u$ as before and $r=xy -\zeta y^{-1}x$. 
	  Then $r^{\ast}=-r$ and $v=(1-r)(1+r)^{-1}$ is a unitary element in $D^{\times}$. Here 
		$\psi(r)=\mathbf{i}\mathbf{j}-b^{-1}\mathbf{i}\mathbf{j}=(1-b^{-1})\mathbf{i}\mathbf{j}$, and 
		$\psi(v)=(1-(1-b^{-1})\mathbf{i}\mathbf{j})(1+(1-b^{-1})\mathbf{i}\mathbf{j})^{-1}$. As in 
		\eqref{caseIIIi}, we show that 
		$\{u, v^{-1}uv\}$ is a free symmetric pair.
  \end{enumerate}

\item The involution on $D$ satisfies $x^{\ast}=\zeta y, y^{\ast}=\zeta^{-1}x$ and
  $\lambda^{\ast}=\lambda$, for some $\zeta\in \mathcal{Z}(\Gamma)$. Let 
	$\zeta=\lambda^{m}$, with $m$ an integer. There are two cases to consider.

  \begin{enumerate}[(i)]
  \item \textit{$m$ is even:} In this 
    case, take $u=1+x+y^{\ast}$ and $v=1+x^{\ast}+y$. Then $u^{\ast}=v$ and 
    the pair $\{\psi(u), \psi(u^{\ast})\}=\{1+2\mathbf{i},1+2\mathbf{j}\}$ is free by 
    \cite[Theorem~2]{GMS99}. Our free symmetric pair in $D$ is then
    $\{uu^{\ast}, u^{\ast}u\}$. \label{caseIVi}
  \item \textit{$m$ is odd:} Note that, here, if we take the same elements as in \eqref{caseIVi}, 
	  their images collapse to zero. Since $\psi(\zeta)=-1$, let us define 
		$u=1+x+\zeta y^{\ast} =1+2x$. Then $u^{\ast}=1+x^{\ast}+\zeta y=1+2\zeta y$, and we 
		have $\psi(u)=1+2\mathbf{i}, \psi(u^{\ast})=1-2\mathbf{j}$. Therefore, $\{u, u^{\ast}\}$ is a free 
		pair that switches under $\ast$, by \cite[Theorem~2]{GMS99}. So,
    $\{uu^{\ast}, u^{\ast}u\}$ is a free symmetric pair.
  \end{enumerate}
\end{enumerate} 

\medskip

\noindent\textit{Existence of free unitary pairs:} again, we consider the
four classes of involutions on $\Gamma$.

\begin{enumerate}[(I)]
\item The involution on $D$ satisfies $x^{\ast}=\zeta x, y^{\ast}=\eta y$ 
  and $\lambda^{\ast}=\lambda^{-1}$, for some $\zeta,\eta\in \mathcal{Z}(\Gamma)$.
  Let $\zeta=\lambda^{m}$ and $\eta=\lambda^{n}$, where $m$ and $n$ are integers.
	There are two cases to consider.
  
	\begin{enumerate}[(i)]
  \item \textit{$m$ and $n$ are both even or have distinct parity:} In this case,
	  the specialization $\psi$ can be regarded as homomorphism which
		preserves involutions onto the quaternion algebra $\mathfrak{H}$, with
		an involution of the first kind and orthogonal type. Since, by \cite[Lemma~2.6]{FGM05},
		there are no free unitary pairs in this case, our technique does not work.
  \item \textit{$m$ and $n$ are both odd:} 
	  Set $u=(1-x+x^{\ast})(1+x-x^{\ast})^{-1}$ and $v=(1-y+y^{\ast})(1+y-y^{\ast})^{-1}$. 
		Then $\psi(u)=(1-4a)^{-1}(1-2\mathbf{i})^{2}$, 
		$\psi(v)=(1-4b)^{-1}(1-2\mathbf{j})^{2}$, and 
		$\{u, v \}$ is a free unitary pair.
  \end{enumerate}
   
\item The involution on $D$ satisfies $x^{\ast}=x^{-1}, y^{\ast}=y^{-1}$ and
  $\lambda^{\ast}=\lambda^{-1}$. In this case, take $u=x-x^{\ast}$ and $v=y-y^{\ast}$. 
  Then $\psi\bigl((1-u)(1-u^{\ast})^{-1}\bigr)=(1-(1-a^{-1})^2a)^{-1}(1-(1-a^{-1})\mathbf{i})^2$
	and $\psi\bigl((1-v)(1-v^{\ast})^{-1}\bigr)=(1-(1-b^{-1})^2b)^{-1}(1-(1-b^{-1})\mathbf{j})^2$.
	Arguing as in \eqref{caseII} above, we conclude that these two unitary elements 
  generate a free subgroup of $D^{\times}$.
 
\item The involution on $D$ satisfies $x^{\ast}=x, y^{\ast}=\zeta y^{-1}$ and
  $\lambda^{\ast}=\lambda$, for some $\zeta\in \mathcal{Z}(\Gamma)$. 
	Let $\zeta=\lambda^{m}$, $m$ an integer. We need to consider two possibilities.

  \begin{enumerate}[(i)]
  \item \textit{$m$ is even:} In this case, set $v=y-y^{\ast}$;
    then $r=(1-v)(1+v)^{-1}$ is unitary in $D$. 
		Since $\mathbf{j}^{-1}=b^{-1}\mathbf{j}$, we have
    \begin{align*}
    \psi(r)&=\frac{1-\mathbf{j}+b^{-1}\mathbf{j}}{1+\mathbf{j}-b^{-1}\mathbf{j}}=
		\frac{1-(1-b^{-1})\mathbf{j}}{1+(1-b^{-1})\mathbf{j}}\\
		&=\frac{1}{-b+3b^2-b^3}\bigl(b+(1-b)\mathbf{j}\bigr)^2.
    \end{align*}
    Set $u=xy^5-(xy^{5})^{\ast}$; then $s=(1-u)(1+u)^{-1}$ 
    is also unitary in $D$. We have
    \begin{align*}
    \psi(s)&=\psi\left(\frac{1-xy^5+\xi^{5}y^{-5}x}{1+xy^5-\xi^{5}y^{-5}x}\right)
    =\frac{1-b^2\mathbf{i}\mathbf{j}+b^{-3}\mathbf{j}\mathbf{i}}
		{1+b^2\mathbf{i}\mathbf{j}-b^{-3}\mathbf{j}\mathbf{i}}\\
		&=\frac{1+(b^{-3}+b^{2})\mathbf{j}\mathbf{i}}{1-(b^{-3}+b^{2})\mathbf{j}\mathbf{i}} = 
		\frac{\bigl(1+(b^{-3}+b^{2})\mathbf{j}\mathbf{i}\bigr)^2}{1+(b^{-3}+b^{2})^2ab}\\
		&=\frac{1}{b^6(1+(b^{-3}+b^{2})^2ab)}\bigl(b^3+(1+b^5)\mathbf{j}\mathbf{i}\bigr)^2,
    \end{align*}
    Now, instead of proving that $\psi(r)$ and 
		$\psi(s)$ generate a free subgroup, let us prove that $\psi(r)$ and $\psi(s^{-1}rs)$ 
		do. Set $F=k(a,b)$, $L=F(\mathbf{j})$, and let us regard $\mathfrak{H}$ as a left $L$-module with 
		basis $\{1,\mathbf{i}\}$. Then, under the right regular representation of $\mathfrak{H}$ over
		$L$, we obtain
    $$
    b+(1-b)\mathbf{j} \longmapsto A=\mathbf{j}\begin{pmatrix} 1-b+\mathbf{j}  & 0 \\
    0  &  -1+b+\mathbf{j}\end{pmatrix}
    $$
    and
    $$
    b^3+(1+b^5)\mathbf{j}\mathbf{i} \longmapsto B=\mathbf{j}\begin{pmatrix}b^2\mathbf{j}
		& 1+b^5 \\
    -a(1+b^5) & b^2\mathbf{j}\end{pmatrix}.
    $$
		
		We shall show that $A^2$ and $B^2$ satisfy the hypotheses in
		Theorem~\ref{T:FreeMat}.		
    Let $R=k(a)[b]$ and let $S$ be the ring of integers 
    of $L$ with respect to $R$. Let us start by observing that the discriminant of the basis 
    $\{1, \mathbf{j}\}$ is $4b$, and so $b$ is the only prime of $R$ that ramifies in $S$. Also, 
    since $\charac k \neq 2$, the polynomial $f(X)=X^2-b$ is separable over $F$.
    Let us denote $\alpha=-1+b+\mathbf{j}$. Then $\Norm_{L/F}(\alpha)=1-3b+b^2$, $\alpha$ is not a 
    unit of $S$, and $\alpha$ and $-1+b-\mathbf{j}$ are not associates. Therefore, if 
    $\mathfrak{P}$ is the prime ideal of $S$ that contains $\alpha$, and $\nu$ is the 
    valuation associated to $\mathfrak{P}$, we have that $\nu(-1+b-\mathbf{j})=0$. 
		From this it 
    follows that $\nu(\alpha^2)=2$, and that $\nu\bigl((1-b+\mathbf{j})^2\bigr)=0$.
		This proves that $A^2$ is a diagonal matrix with diagonal entries having different
		$\nu$-values.
		
    Let us now check that the valuation $\nu$ on each entry of $B^2$ and of $B^{-2}$ 
    is $0$. This fact for
		$$
    B^2=b\begin{pmatrix}b^5-a\beta{^2} & 2\beta b^{2}\mathbf{j} \\
    -2a\beta b^{2}\mathbf{j} &  b^5-a\beta^{2}\end{pmatrix},
    $$
		where $\beta=1+b^5$, follows from the following:
    \begin{itemize}
    \item $b^5-a\beta^{2}=b^5-a(1+b^5)^2$ and $\nu(b^5-a\beta^{2})=0$.
    \item $\nu(2b^2\beta\mathbf{j})=\nu\bigl(2b^2(1+b^5)\mathbf{j}\bigr)=
		\nu(1+b^5)+\nu(\mathbf{j})+2\nu(b)=0$, 
      since, from the fact that $\gcd\bigl(1+b^5,\Norm_{L/F}(\alpha)\bigr)=1$, it 
			follows that $\nu(1+b^5)=0$. Also, from $\mathbf{j}^2=b$, it follows that 
			$2\nu(\mathbf{j})=\nu(b)=0$.
    \item Similarly, $\nu(-2ab^2\beta\mathbf{j})=0$.
    \end{itemize}
		The argument for the entries of $B^{-2}$ is similar.
    Now, by Theorem~\ref{T:FreeMat}, $\{ A^2,B^{-2}A^2B^2\}$ is a free pair.
    Therefore the unitary elements $r$ and $s^{-1}rs$ of $D^{\times}$ form a free pair.
	\item \textit{$m$ is odd:} Set $v=y-y^{\ast}$ and $r=(1-v)(1+v)^{-1}$. 
	  Then $\psi(r)=(1-(1+b^{-1})^{2}b)^{-1}(1-(1+b^{-1})\mathbf{j})^{2}$. Also set 
		$u=xy-(xy)^{\ast}$. Then $s=(1-u)(1+u)^{-1}$ is unitary and we have 
		$\psi(s)=(1+(b^{-1}-1)^{2}ab)^{-1}(1-(b^{-1})\mathbf{j}\mathbf{i})^{2}$. Repeating the arguments 
		of the former item we can prove that $\psi(r)$ and $\psi(s^{-1}rs)$ generate a 
		free group.
  \end{enumerate}
 
\item The involution on $D$ satisfies $x^{\ast}=\zeta y, y^{\ast}=\zeta^{-1}x$ and
  $\lambda^{\ast}=\lambda$, for some $\zeta\in \mathcal{Z}(\Gamma)$. Let 
	$\zeta=\lambda^{m}$. We consider two cases.
 
  \begin{enumerate}[(i)]
  \item \textit{$m$ is even:} Set $r=xy^{-1}$. Then $r^{\ast}=\zeta^{2}x^{-1}y$.
    Define $u=\bigl(1-(r-r^{*})\bigr)\bigl(1+(r-r^{*})\bigr)^{-1}$. Consider 
	  the specialization $\phi$ from $D$ to $\mathfrak{H}$ such that $\phi(x)=\mathbf{i}$ and 
	  $\phi(y)=\mathbf{k}=\mathbf{i}\mathbf{j}$. Applying $\phi$ to $u$, we obtain  
    $$
    \phi(u)=\frac{1+(1+b^{-1})\mathbf{j}}{1-(1+b^{-1})\mathbf{j}}=
		\frac{-1}{1+b+b^2}(1+b+\mathbf{j})^2.
    $$
    Now, defining  $v=\bigl(1-(x-x^{\ast})\bigr)\bigl(1+(x-x^{\ast})\bigr)^{-1}$, 
	  we obtain
    $$
    \phi(v)=\frac{1-(1+\mathbf{j})\mathbf{i}}{1+(1+\mathbf{j})\mathbf{i}}=
		\frac{1}{1-a+ab}\bigl(1-(1+\mathbf{j})\mathbf{i}\bigr)^2.
    $$
    Regard $\mathfrak{H}$ as a left $k(a,b)(\mathbf{j})$-module with basis $\{1,\mathbf{i}\}$.
	  Under the right regular representation, we get
    $$
    \phi(u)\longmapsto\frac{-1}{1+b+b^2}\begin{pmatrix} 1+b+\mathbf{j}&0\\
		0& 1+b-\mathbf{j}\end{pmatrix}^{2}
    $$
    and
    $$
    \phi(v)\longmapsto \frac{1}{1-a+ab}\begin{pmatrix}1& -1-\mathbf{j} \\a(-1+\mathbf{j}) &  1
		\end{pmatrix}^{2}.
    $$
	  Let
	  $$
    A=\begin{pmatrix} 1+b+\mathbf{j}&0\\0& 1+b-\mathbf{j}\end{pmatrix}
	  $$
	  and
		$$
	  B=\begin{pmatrix}1& -1-\mathbf{j} \\a(-1+\mathbf{j}) &  1\end{pmatrix}.
    $$
		
    We claim that $A^2$ and $B^{-2}A^2B^2$ generate a free group. Indeed,
	  let $F=k(a, b)$, $R=k(a)[b]$,  $L=F(\mathbf{j})$, and let $S$ be the ring of 
    integers of $L$ with respect to $R$. We start by noting that the discriminant 
    of the basis $\{1,\mathbf{j}\}$ is $4b$, and so $b$ is the only prime of 
    $R$ that ramifies in $S$. Also, since $\charac k \neq 2$, the polynomial 
    $f(X)=X^2-b$ is separable over $F$. Let us denote $\mu=1+b+\mathbf{j}$. Then 
    $\Norm_{L/F}(\mu)=1+b+b^2$, $\mu$ is not a unit of $S$, and $\mu$ and 
		$1+b-\mathbf{j}$ are not 
    associates; for if $1+b-\mathbf{j}\in\mathfrak{P}$, where $\mathfrak{P}$ denotes the prime 
	  ideal of $S$ that contains $\mu$, we would have that $2\mathbf{j}$, and hence 
		$\mathbf{j}$ would
	  belong to $\mathfrak{P}$, but, then, $1=\Norm_{L/F}(\mu)-b-b^2 =
	  \Norm_{L/F}(\mu)-(\mathbf{j}^2+\mathbf{j}^4)\in\mathfrak{P}$, a contradiction. 
		Thus, if $\nu$
	  denotes the valuation associated to $\mathfrak{P}$, then $\nu(1+b-\mathbf{j})=0$
	  and $\nu(\mu^2)=2$. Let us check that the valuation $\nu$ on each entry of 
	  $B^2$ and of $B^{-2}$ is $0$.  We have
	  $$
    B^2=\begin{pmatrix}1+a-ab&-2(1+\mathbf{j})\\2a(-1+\mathbf{j})&1+a-ab\end{pmatrix}.
    $$
		Now, $1+a-ab$ is not divisible by $1+b+b^2$ in $k(a)[b]$, so
		$\nu(1+a-ab)=0$. For the other two entries, note that
	  $\Norm_{L/F}(-2(1+\mathbf{j}))=4(1-b)$ and $\Norm_{L/F}(2a(-1+\mathbf{j}))=4a^2(1-b)$,
		which are not divisible by $1+b+b^2$. This implies that these
		two entries also have zero $\nu$-valuation.
    The verification for the entries of $B^{-2}$ is analogous. By 
	  Theorem~\ref{T:FreeMat},  $\{A^2, B^{-2}A^2B^2\}$ is a free pair, and,
    therefore, $\{u, v^{-1}uv\}$ is a free unitary pair.
  \item \emph{$m$ is odd:} This case is very similar to the former. We take the same 
    $u$ and $v$ and perform analogous computations to obtain the result.  
  \end{enumerate}
\end{enumerate}

This ends the proof of Theorem~\ref{T:NilpD}.

\bigskip

\begin{remark}
  It is natural to ask why we take, in Theorem~\ref{T:NilpD}, our division ring to be 
  the division ring of quotients of a torsion free nilpotent group algebra, instead of considering 
  a general division ring generated over its center by a torsion free nilpotent subgroup. 
  One of the reasons is that in this latter case it is not always possible to obtain a 
  specialization from the division ring to a quaternion algebra. For example, take
  $\lambda=\sqrt{2}$, $S=\Q(\lambda, X)[Y,\sigma]$ to be the skew polynomial ring 
  over $\Q(\lambda,X)$ in the indeterminate $Y$, with automorphism $\sigma$ such 
  that $X^{\sigma}=\lambda X$ and $\lambda^{\sigma}=\lambda$, and let 
  $D$ be the division ring of quotients of $S$. Let $R$ be the ring of integers
  of $\Q(\lambda)$. Since $(\sqrt{2}+1)(\sqrt{2}-1)=1$ we have that 
  $\sqrt{2}+1$ is invertible in $R$. Therefore there is no ideal $\mathfrak{P}$ in 
  $R$ containing $\sqrt{2}+1$.
  
  In fact, this general situation was already considered in \cite{GMS00}, in the following 
  context. Let $D$ be a division ring with prime subfield $k$, let $x,y\in D^{\times}$ 
  be such that $\lambda=x^{-1}y^{-1}x y \neq 1$ and with $\lambda$ centralizing 
  both $x$ and $y$. Suppose, moreover, that $D$ is generated by $k$, $x$ and $y$. 
  We call $K=k(\lambda, x)$. Then the action of $y$ on $x$ gives rise to an 
  automorphism $\sigma$ of $K$, such that $\lambda^{\sigma}=\lambda$ 
  and $x^{\sigma}=\lambda x$. Under this setting, the best we can say is stated in 
  \cite[Lemma~24 and Theorem~28]{GMS00}:
  \begin{enumerate} [(1)]
  \item If $\lambda$ is not a root of unity, then $x$ is transcendental over 
    $k(\lambda)$, and $D$ is the division ring of quotients of the skew polynomial ring 
    $K[y; \sigma]$.
  \item If $\lambda$ is algebraic over $k$ but not a root of unity, then $k=\Q$, 
  and there is a specialization from $D$ to a symbol algebra $(a, b: n, \Phi, \theta)$ 
  of positive characteristic, with $a$ and $b$ algebraically independent over the 
  prime field.
  \end{enumerate}
  In this general context, we do not have control over the images of symmetric and 
	unitary pairs of $D$ in the specialization, making it impossible to apply the 
	technique of the proof of Theorem~\ref{T:NilpD}. 
\end{remark}


\section{Involutions on the first Weyl algebra}\label{S:InvWeyl}

In this and the following sections we shall address the problem of finding and 
exhibiting pairs of elements
in the division ring of quotients $D_1(\mathbb{Q})$ of the first Weyl algebra $A_1(\mathbb{Q})$
over the field of rational numbers $\mathbb{Q}$ which generate 
a free subgroup of $D_1(\mathbb{Q})^{\times}$, and are symmetric or unitary with respect to 
an involution on $D_1(\mathbb{Q})$.

Similarly to the group algebra case, we only consider involutions 
on $D_1(\mathbb{Q})$ that are extensions of involutions on $A_1(\mathbb{Q})$. However, here we are not able 
to give a complete description in all situations. This is due, on the one hand, to our 
lack of 
understanding of how to construct free pairs in central simple algebras, and, on the 
other hand,
to the intrinsic difficulty of producing ``reasonable'' symmetric and unitary elements 
with respect to 
the given involutions in $D_1(\mathbb{Q})$. As for the involutions on $D_1(\mathbb{Q})$ that are not extensions of involutions on $A_1(\mathbb{Q})$, they are beyond the grasp of our technique.

It is well-known that the $\Q$-automorphisms of $A_1(\mathbb{Q})$ are products of triangular 
automorphisms and
linear automorphisms with determinant $1$ (see \cite{Dixmier68} and \cite{ML84}). 
A full characterization of involutions on $A_1(\Q)$ might be possible along the
same lines. In fact, this has been suggested in \cite{Moskowiczpp}, where the idea of a proof
is outlined. Below, we concentrate
only on linear involutions, in order to illustrate how the methods used in the previous section
can be applied in this context. 

Given $\alpha,\beta,\gamma,\delta\in\Q$, there exists a unique $\Q$-algebra 
antimorphism 
$\ast\colon\Q\langle{s,t}\rangle\to A_1(\mathbb{Q})$ satisfying
$s^{\ast}=\alpha s+\beta t$ and $t^{\ast}=\gamma s+\delta t$.
Such a map will induce an anti-endomorphism $\ast$ on $A_1(\mathbb{Q})$ if and only if 
\begin{equation}\label{eq:weyl1}
\alpha\delta-\beta\gamma=-1.
\end{equation}
Moreover, $\ast$ will be an involution on $A_1(\mathbb{Q})$ if and only if $s^{\ast\ast}=s$ 
and
$t^{\ast\ast}=t$. The first relation gives
\begin{equation}\label{eq:weyl2}
\begin{cases}
      \alpha^2 + \beta\gamma=1  &   \\
      \beta(\alpha + \delta)=0
\end{cases}
\end{equation}
and the second,
\begin{equation}\label{eq:weyl3}
\begin{cases}
     \gamma(\alpha+\delta)=0  &  \\
     \beta\gamma+\delta^2=1
\end{cases}.
\end{equation}


Since equations \eqref{eq:weyl1}, \eqref{eq:weyl2} and \eqref{eq:weyl3}
are equivalent to
$$
\begin{cases}
  \alpha^2+\beta\gamma=1 & \\
  \alpha=-\delta
\end{cases},
$$
we have the following result.

\begin{proposition}
  Let $\ast$ be a linear operator on the $2$-dimensional $\mathbb{Q}$-vector
	space with basis $\{s,t\}$. Then $\ast$ extends to a $\mathbb{Q}$-involution on
	$A_1(\mathbb{Q})=\mathbb{Q}\langle s,t : st-ts=1\rangle$ if and only if there 
	exist $\alpha,\beta,\gamma\in\mathbb{Q}$ satisfying $\alpha^2+\beta\gamma=1$, 
	$s^{\ast}=\alpha s+\beta t$ and $t^{\ast}=\gamma s-\alpha t$.	\qed
	
\end{proposition}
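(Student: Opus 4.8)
The plan is to follow the three requirements for a linear operator $\ast$ on $\mathbb{Q}s\oplus\mathbb{Q}t$ to extend to a $\mathbb{Q}$-involution on $A_1(\mathbb{Q})$, in the order they were introduced in the discussion above, and then to simplify the resulting system of scalar equations.

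First, writing $s^{\ast}=\alpha s+\beta t$ and $t^{\ast}=\gamma s+\delta t$ with $\alpha,\beta,\gamma,\delta\in\mathbb{Q}$, I would invoke the universal property of the free associative algebra: any assignment of $s$ and $t$ to elements of an algebra $B$ extends uniquely to an antimorphism $\mathbb{Q}\langle s,t\rangle\to B$ (equivalently, to a homomorphism into $B^{\mathrm{op}}$). Taking $B=A_1(\mathbb{Q})$ yields the unique $\mathbb{Q}$-algebra antimorphism $\ast\colon\mathbb{Q}\langle s,t\rangle\to A_1(\mathbb{Q})$ mentioned in the text. Since $\ast$ reverses products, it carries the two-sided ideal generated by $st-ts-1$ into the image of that single element, so $\ast$ factors through $A_1(\mathbb{Q})=\mathbb{Q}\langle s,t\rangle/(st-ts-1)$ precisely when $(st-ts)^{\ast}=1$ in $A_1(\mathbb{Q})$. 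Here $(st-ts)^{\ast}=t^{\ast}s^{\ast}-s^{\ast}t^{\ast}=-[s^{\ast},t^{\ast}]=-(\alpha\delta-\beta\gamma)[s,t]=-(\alpha\delta-\beta\gamma)$, so this equals $1$ exactly when $\alpha\delta-\beta\gamma=-1$, which is condition \eqref{eq:weyl1}.

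Next, $\ast^{2}$ is a composite of two antimorphisms, hence an algebra endomorphism of $A_1(\mathbb{Q})$, so $\ast^{2}=\mathrm{id}$ if and only if $\ast^{2}$ fixes the generators, i.e.\ $s^{\ast\ast}=s$ and $t^{\ast\ast}=t$. Substituting and collecting the coefficients of $s$ and of $t$ gives, respectively, the systems \eqref{eq:weyl2} and \eqref{eq:weyl3}. The final step is to simplify \eqref{eq:weyl1}--\eqref{eq:weyl3}: adding \eqref{eq:weyl1} to the first equation of \eqref{eq:weyl2} yields $\alpha(\alpha+\delta)=0$, and adding \eqref{eq:weyl1} to the second equation of \eqref{eq:weyl3} yields $\delta(\alpha+\delta)=0$; together with $\beta(\alpha+\delta)=0$ and $\gamma(\alpha+\delta)=0$ already present, we see that $\alpha+\delta$ annihilates each of $\alpha,\beta,\gamma,\delta$. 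If $\alpha+\delta\neq 0$ then $\alpha=\beta=\gamma=\delta=0$, contradicting $\alpha^{2}+\beta\gamma=1$; hence $\alpha=-\delta$, and then each equation of \eqref{eq:weyl1}--\eqref{eq:weyl3} reduces to $\alpha^{2}+\beta\gamma=1$. Conversely, if $\alpha=-\delta$ and $\alpha^{2}+\beta\gamma=1$, all three of \eqref{eq:weyl1}--\eqref{eq:weyl3} hold, and substituting $\delta=-\alpha$ puts the operator in the stated form $s^{\ast}=\alpha s+\beta t$, $t^{\ast}=\gamma s-\alpha t$.

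I do not expect a genuine obstacle: once the reduction to the action on generators is in place, everything is elementary linear algebra over $\mathbb{Q}$. The two points that deserve a line of justification are the appeal to the universal property (to see that the linear data really extends to an antimorphism of the free algebra and then factors through the Weyl relation) and the small case distinction on whether $\alpha+\delta$ vanishes when collapsing the system \eqref{eq:weyl1}--\eqref{eq:weyl3}.
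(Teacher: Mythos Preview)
Your proposal is correct and follows essentially the same route as the paper: the discussion preceding the proposition already derives \eqref{eq:weyl1}--\eqref{eq:weyl3} exactly as you do, and the paper then simply asserts their equivalence to $\alpha^{2}+\beta\gamma=1$, $\alpha=-\delta$. Your contribution is to supply that last reduction explicitly, and your trick of adding \eqref{eq:weyl1} to the first equation of \eqref{eq:weyl2} and to the second of \eqref{eq:weyl3} to force $(\alpha+\delta)$ to annihilate all four coefficients is a clean way to do it.
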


In particular, both
$$
s^{\ast}=t\quad\text{and}\quad t^{\ast}=s
$$
and
$$
s^{\ast}=s\quad\text{and}\quad t^{\ast}=-t
$$
define $\mathbb{Q}$-involutions on $A_1(\mathbb{Q})$.  In the next two sections, we present
a proof of Theorem~\ref{T:Weyl}, according to which the division ring of quotients of $A_1(\mathbb{Q})$ contains
a free symmetric pair with respect to the first involution above, and
a free unitary pair with respect to the second.


\section{Proof of Theorem~\ref{T:Weyl}(\normalfont\ref{Weyl:1})} \label{S:Inv1} 

Let $\ast$ be the $\Q$-involution of $D_1(\mathbb{Q})$ satisfying $s^{\ast}=t$ and 
$t^{\ast}=s$. Consider in $D_1(\mathbb{Q})^{\times}$ the following elements:
$$
u=\bigl(1+(ts)^{2}\bigr)\bigl(1+(ts+1)^{2}\bigr)^{-1}\quad\text{and}\quad 
v=\bigl(1+(sts-tst)\bigr)\bigl(1-(sts-tst)\bigr)^{-1}.
$$
Our objective is to prove that the pair $\{u, v^{-1}uv\}$ is free symmetric.

Observe that, since $(ts)^{\ast}=ts$ and $(sts)^{\ast}=tst$, the elements $ts$ 
and $sts-tst$ are, respectively, symmetric and anti-symmetric. Therefore $v$ is unitary, 
and $u$ and $v^{-1}uv$ 
are indeed symmetric.

We are left to prove that $\{u, v^{-1}uv\}$ is free.

\begin{lemma}\label{C:One}
Let $D$ be the division ring of quotients of the skew polynomial ring $\Q(X)[Y;\sigma]$, where $\sigma$ stands 
for the $\Q$-automorphism of $\Q(X)$ satisfying $X^{\sigma}=X+1$. Then there is
a $\Q$-algebra isomorphism $\phi\colon D_1(\mathbb{Q}) \to D$ such that $\phi(s)=Y^{-1}X$ 
and $\phi(t)=Y$.
\end{lemma}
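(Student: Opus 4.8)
The plan is to read the isomorphism straight off the defining relations. First I would record the commutation rule in the base: in $\mathbb{Q}(X)[Y;\sigma]$ one has $aY=Ya^{\sigma}$ for $a\in\mathbb{Q}(X)$, so $XY=YX^{\sigma}=Y(X+1)$, equivalently $Y^{-1}XY=X+1$ in $D$. Setting $s_{0}=Y^{-1}X$ and $t_{0}=Y$, a one-line computation then gives
$$
s_{0}t_{0}-t_{0}s_{0}=Y^{-1}XY-YY^{-1}X=(X+1)-X=1,
$$
so, since $A_1(\mathbb{Q})$ is the quotient of the free algebra $\mathbb{Q}\langle s,t\rangle$ by the ideal generated by $st-ts-1$, there is a $\mathbb{Q}$-algebra homomorphism $\phi_{0}\colon A_1(\mathbb{Q})\to D$ with $\phi_{0}(s)=Y^{-1}X$ and $\phi_{0}(t)=Y$.

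Next I would pass to division rings of quotients. The kernel of $\phi_{0}$ is a two-sided ideal of $A_1(\mathbb{Q})$ not containing $1$; since $\charac\mathbb{Q}=0$, the first Weyl algebra is simple, whence $\ker\phi_{0}=0$. As $A_1(\mathbb{Q})$ is a Noetherian domain, hence an Ore domain, every nonzero element of $A_1(\mathbb{Q})$ is carried by $\phi_{0}$ to a nonzero element of the division ring $D$, i.e.\ to a unit; by the universal property of Ore localization, $\phi_{0}$ extends to a $\mathbb{Q}$-algebra homomorphism $\phi\colon D_1(\mathbb{Q})\to D$ with $\phi(s)=Y^{-1}X$ and $\phi(t)=Y$, and $\phi$ is injective because its source is a division ring.

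It then remains to check that $\phi$ is onto. Its image $\phi(D_1(\mathbb{Q}))$ is a division subring of $D$ containing $\phi(t)=Y$ and $\phi(t)\phi(s)=Y\cdot Y^{-1}X=X$; being closed under inverses, it contains the subfield $\mathbb{Q}(X)$ and hence the subring $\mathbb{Q}(X)[Y;\sigma]$ generated by $\mathbb{Q}(X)$ and $Y$. Since $D$ is by hypothesis the division ring of quotients of $\mathbb{Q}(X)[Y;\sigma]$, this forces $\phi(D_1(\mathbb{Q}))=D$, so $\phi$ is the desired isomorphism. I do not anticipate a real obstacle here: the only points needing a word of care are the two passages to division rings of quotients (justified by Ore's theorem together with simplicity of $A_1(\mathbb{Q})$) and the observation that the image, already being a division ring, automatically swallows all of $\mathbb{Q}(X)[Y;\sigma]$.
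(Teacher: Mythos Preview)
Your proof is correct and follows essentially the same route as the paper's: define a map on the free algebra, observe it factors through $A_1(\mathbb{Q})$, use simplicity to get injectivity, and extend to the Ore quotient. The paper's version is terser and simply asserts that the injective map ``can be extended to an isomorphism from $D_1(\mathbb{Q})$ onto $D$''; your argument is more careful in that you explicitly verify the Weyl relation $s_0t_0-t_0s_0=1$ and supply the surjectivity check (via $X=\phi(ts)$ and $Y=\phi(t)$) that the paper leaves implicit.
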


\begin{proof}
The $\Q$-algebra map from the free algebra $\Q\langle s,t\rangle$ of rank $2$
into $D$, sending $s$ to $Y^{-1}X$ and
$t$ to $Y$ induces a $\Q$-algebra map $\phi\colon A_1(\mathbb{Q})\to D$ that
satisfies $\phi(s)=Y^{-1}X$ and $\phi(t)=Y$. Since $A_1(\mathbb{Q})$ is simple, $\phi$
is injective and can, therefore, be extended to an isomorphism from $D_1(\mathbb{Q})$
onto $D$.
\end{proof}

We next show that there exists a specialization from $D$ to a cyclic algebra
of degree $3$ over a field of characteristic $3$.

Let $GF(3)$ denote the Galois field with $3$ elements and let $K=GF(3)(a,b)$ 
be the field of rational functions in the commuting indeterminates $a$ 
and $b$ over $GF(3)$. Consider the algebra
$$
\mathfrak{C}=K\langle \mathbf{i},\mathbf{j} : 
\mathbf{j}^3=b, \mathbf{i}^3-\mathbf{i}=a , \mathbf{i}\mathbf{j}=
\mathbf{j}(\mathbf{i}+1) \rangle.
$$
Then $\mathfrak{C}$ is a cyclic (central simple) algebra over $K$ of degree $3$. 
(See, \textit{e.g.}~\cite[Section~7.7]{pC91}.) A basis for $\mathfrak{C}$ over
$K$ is given by $\{1,\mathbf{i},\mathbf{i}^2,\mathbf{j},\mathbf{i}\mathbf{j},
\mathbf{i}^2\mathbf{j},\mathbf{j}^2,\mathbf{i}\mathbf{j}^2,\mathbf{i}^2\mathbf{j}^2\}$. 
Moreover, since $\charac K=3$,
it follows that the relation $\mathbf{j}\mathbf{i}=(\mathbf{i}+2)\mathbf{j}$ 
holds in $\mathfrak{C}$.

Now, apply Theorem \ref{T:Localization} to $R=\mathbb{Z}$ and the
valuation on $F=\mathbb{Q}$ determined by the prime $3$.
Let $\widetilde{R}$ be the valuation ring of $F$ and let
$\sigma$ be the $R$-automorphism of $\widetilde{R}[X]$ such that
$X^{\sigma}=X+1$. Let $A, Q, M$ and $B$ be as in
Theorem~\ref{T:Localization}. Note that the relation 
$Y^{-1}XY \equiv X+1 \pmod{3}$ implies $Y^{-3}XY^{3}\equiv X \pmod{3}$.
Also, $X$, $X+1$ and $X+2$ are the roots of the polynomial $t^{3}-t-X(X+1)(X+2)$. 
In the quotient $QM^{-1}/B\cong \mathfrak{C}$, we see that $a=X(X+1)(X+2)$ and $b=Y^{3}$ 
are commuting  independent 
indeterminates over $GF(3)$ which are contained in the center of $\mathfrak{C}$. We, thus,
have a specialization $\tau$ from $D$ to $\mathfrak{C}$, with domain $QM^{-1}$ 
satisfying $\tau(X)=\mathbf{i}$ and $\tau(Y)=\mathbf{j}$.

At this point, it should be noted that 
$$
\phi(ts)=X\quad\text{and}\quad\phi(sts-tst)=Y^{-1}X^2-XY.
$$
And, therefore,
$$
\tau(\phi(ts))=\mathbf{i}\quad\text{and}\quad\tau(\phi(sts-tst))
=\mathbf{j}^{-1}\mathbf{i}^2-\mathbf{i}\mathbf{j}.
$$

Regard $\mathfrak{C}$ as a free left module over $K(\mathbf{i})$ with basis 
$\mathcal{B}=\{1, \mathbf{j}, \mathbf{j}^{2}\}$, 
and consider the right regular representation afforded by this module.
Since $\tau(\phi(sts-tst))=\mathbf{j}^{-1}\mathbf{i}^{2}-\mathbf{i}\mathbf{j}
=-\mathbf{i}\mathbf{j}+b^{-1}(\mathbf{i}+1)^{2}\mathbf{j}^{2}$, the matrix
$W$ of $\tau(\phi(sts-tst))$ with respect to $\mathcal{B}$ is given by
$$
W=\begin{pmatrix}
        0 & 2\mathbf{i} & b^{-1}(\mathbf{i}+1)^{2} \\
        \mathbf{i}^{2} & 0 & 2\mathbf{i}+1 \\
        2b(\mathbf{i}+1) & (\mathbf{i}+2)^{2} &   0
     \end{pmatrix}.
$$
Clearly, under the same representation, the matrix corresponding to
 $\tau(\phi(u))=(1+\mathbf{i}^2)(1+(\mathbf{i}+1)^2)^{-1}$ is given by
 $$
 U=\begin{pmatrix}
     \frac{1+\mathbf{i}^2}{1+(\mathbf{i}+1)^2} & 0 & 0\\
     0 & \frac{1+(\mathbf{i}+2)^2}{1+\mathbf{i}^2} & 0\\
     0 & 0 & \frac{1+(\mathbf{i}+1)^2}{1+(\mathbf{i}+2)^2}
      \end{pmatrix}.
 $$

If we denote $V=(I+W)(I-W)^{-1}$, the proof of Theorem~\ref{T:Weyl}(\ref{Weyl:1}) will 
be completed once the following lemma is proved.

\begin{lemma}\label{C:Four}
  The matrices $U$ and $V^{-1}UV$ generate a free 
  subgroup in $\GL(3,K(\mathbf{i}))$.
\end{lemma}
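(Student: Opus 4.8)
The strategy is to apply Theorem~\ref{T:FreeMat} to suitable powers of $U$ and $V$. Since $U$ is already diagonal, I first need to understand the $\nu$-valuations of its diagonal entries for an appropriate valuation $\nu$, and then verify that $V$ (or some power of it) has all entries and all entries of its inverse of $\nu$-valuation zero. The natural field to work over is $L=K(\mathbf{i})$, which is a degree-$3$ extension of $F=K=GF(3)(a,b)$; I would fix the ground ring $R$ to be, say, $GF(3)(b)[a]$ (recalling that $a=\mathbf{i}^3-\mathbf{i}$ lies in the center), take $S$ to be the ring of integers of $L$ over $R$, and pick $\nu$ to be the valuation attached to a prime $\mathfrak P$ of $S$ lying over the prime of $R$ generated by an irreducible factor of the norm $N_{L/F}(1+\mathbf{i}^2)$.

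**Key steps, in order.** (1) Compute $N_{L/F}(1+\mathbf{i}^2)$: since the conjugates of $\mathbf{i}$ over $F$ are $\mathbf{i},\mathbf{i}+1,\mathbf{i}+2$, this norm is $(1+\mathbf{i}^2)(1+(\mathbf{i}+1)^2)(1+(\mathbf{i}+2)^2)$, a polynomial in $a$; check that it is not a unit in $R$ so that some prime $\mathfrak P$ of $S$ contains $1+\mathbf{i}^2$, and record which of the three factors $1+\mathbf{i}^2$, $1+(\mathbf{i}+1)^2$, $1+(\mathbf{i}+2)^2$ actually lie in $\mathfrak P$. The point of choosing $\mathfrak P$ over an irreducible factor coming from exactly one of these three conjugate quantities is that then $\nu$ takes value $0$ on the other two, so that the three diagonal entries of $U$, namely $\tfrac{1+\mathbf{i}^2}{1+(\mathbf{i}+1)^2}$, $\tfrac{1+(\mathbf{i}+2)^2}{1+\mathbf{i}^2}$, $\tfrac{1+(\mathbf{i}+1)^2}{1+(\mathbf{i}+2)^2}$, acquire three \emph{distinct} $\nu$-valuations (one positive, one negative, one zero, up to relabelling). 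That gives hypothesis~(a) of Theorem~\ref{T:FreeMat} for $U$ itself, with $U$ diagonal hence trivially diagonalizable.

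(2) Examine $V=(I+W)(I-W)^{-1}$. First verify $I-W$ is invertible over $L$, i.e.\ $1$ is not an eigenvalue of $W$ — equivalently $\det(I-W)\neq 0$ in $K$, a finite computation using the explicit $3\times 3$ matrix for $W$. Then I would compute $\det(I-W)=N_{L/F}\bigl(1-\tau(\phi(sts-tst))\bigr)$ and its analogue $\det(I+W)$, and argue (choosing $\mathfrak P$, if necessary, to avoid the finitely many primes dividing these determinants, and to avoid the primes dividing the entries of $W$ itself) that all entries of $W$, and of $(I\pm W)^{-1}$, have $\nu$-valuation $0$; hence so do all entries of $V$ and of $V^{-1}$. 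Because the three factors of $N_{L/F}(1+\mathbf{i}^2)$ are pairwise coprime in $R$ (their pairwise differences are units after clearing denominators, as $\mathrm{char}=3$), there is genuine freedom in choosing $\mathfrak P$ so that it simultaneously (i) contains exactly one of the three quantities $1+(\mathbf{i}+c)^2$ and (ii) is coprime to all the ``bad'' data from $W$, $I\pm W$. With both hypotheses of Theorem~\ref{T:FreeMat} verified, the theorem yields that $\{U, V^{-1}UV\}$ is a free pair in $\GL(3,L)$, which is exactly the claim; pulling this back through the specialization $\tau$ and the isomorphism $\phi$ of Lemma~\ref{C:One} then completes the proof of Theorem~\ref{T:Weyl}(\ref{Weyl:1}).

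**Main obstacle.** The delicate point is step~(1): one must check that the norm $N_{L/F}(1+\mathbf{i}^2)$ has an irreducible factor in $R$ that divides precisely one of $1+\mathbf{i}^2,\,1+(\mathbf{i}+1)^2,\,1+(\mathbf{i}+2)^2$ and not the other two — i.e.\ that these three elements are pairwise non-associate in $S$ and not units — since otherwise the three diagonal entries of $U$ would not get distinct valuations and hypothesis~(a) of Theorem~\ref{T:FreeMat} would fail. This is a concrete but essential computation in $GF(3)[a]$ (using that $\mathbf{i}^3-\mathbf{i}=a$), and it is where the characteristic-$3$ hypothesis and the specific choice of $u$ pay off. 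The verification that $V^{\pm 1}$ has unit-valuation entries is then routine bookkeeping with the explicit matrix $W$.
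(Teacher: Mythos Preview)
Your overall strategy matches the paper's exactly: apply Theorem~\ref{T:FreeMat} with the valuation $\nu$ attached to a prime $\mathfrak{P}$ of the ring of integers of $L=K(\mathbf{i})$ containing $1+\mathbf{i}^2$, verify that the three diagonal entries of $U$ get $\nu$-values $1,-1,0$, and then check condition~(b) for $V^{\pm 1}$. However, your step~(2) contains a genuine gap.

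First, the ``genuine freedom in choosing $\mathfrak{P}$'' you invoke does not exist. The norm $N_{L/F}(1+\mathbf{i}^2)$ equals $1+a^2$, which is \emph{irreducible} in $R=GF(3)(b)[a]$ (since $-1$ is a non-square in $GF(3)$). Thus there is a single prime of $R$ over which condition~(a) for $U$ can possibly hold, and above it sit exactly three Galois-conjugate primes of $S$. You cannot ``avoid finitely many bad primes'' while still meeting the eigenvalue requirement for $U$: you are pinned to these three conjugate choices, and switching among them merely applies the Galois automorphism $\mathbf{i}\mapsto\mathbf{i}+1$ to the entries of $V$.

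Second, even granting that the entries of $W$ and the scalars $\det(I\pm W)$ have $\nu$-valuation~$0$, your ``hence'' does not follow. From this one only gets that every entry of $V=(I+W)(I-W)^{-1}$ and of $V^{-1}$ has valuation $\ge 0$; cancellation in the matrix product can easily push an individual entry to strictly positive valuation (e.g.\ a unipotent matrix and its inverse both have integral entries without all entries being units). Condition~(b) of Theorem~\ref{T:FreeMat} demands valuation \emph{exactly} $0$ for every entry, so each of the eighteen entries of $V$ and $V^{-1}$ must be checked to be nonzero modulo $\mathfrak{P}$. This is precisely what the paper does: it reduces modulo $\mathfrak{P}$ using $\mathbf{i}^2\equiv -1$, $\mathbf{i}\equiv a$, $a^2\equiv -1$, and then verifies (with \textit{Maple}) that each entry reduces to a nonzero element of $GF(3)(b)[a]/(1+a^2)$. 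That computation is not optional bookkeeping---it is the heart of the proof.

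A minor side remark: your formula $\det(I-W)=N_{L/F}\bigl(1-\tau(\phi(sts-tst))\bigr)$ is not well-posed, since $\tau(\phi(sts-tst))\in\mathfrak{C}\setminus L$; the determinant here is (up to normalization) the reduced norm of $\mathfrak{C}$ over $K$, not a field norm from $L$ to $F$.
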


\begin{proof}
We will show that $U$ and $V$ are under the conditions of 
Theorem~\ref{T:FreeMat}.

Let $\mathfrak{R}$ be the integral closure in $K(\mathbf{i})$ of the ring of integers 
$GF(3)(b)[a] \subseteq K$, and let $\mathfrak{P}$ be the maximal ideal of $\mathfrak{R}$ 
containing $1+\mathbf{i}^{2}$.

We claim that the minimal polynomial of $1+\mathbf{i}^{2}$ over $K$  is 
$h(t)=t^{3}-2t^{2}+2t-1-a^{2}$. Indeed, let $\alpha_{1}=\mathbf{i}$, 
$\alpha_{2}=\mathbf{i}+1$, 
$\alpha_{3}=\mathbf{i}+2$, and let $s_{1}=\alpha_{1}+\alpha_{2}+\alpha_{3}$,  
$s_{2}=\alpha_{1}\alpha_{2}+\alpha_{1}\alpha_{3}+\alpha_{2}\alpha_{3}$,  
$s_{3}=\alpha_{1}\alpha_{2}\alpha_{3}$  be the elementary symmetric polynomials 
in $\alpha_{1}, \alpha_{2}$ and $\alpha_{3}$. Let us denote by $\widetilde{s_{1}}$, 
$\widetilde{s_{2}}$ and $\widetilde{s_{3}}$ the elementary symmetric polynomials in 
$\alpha_{1}^{2}$, $\alpha_{2}^{2}$ and $\alpha_{3}^{2}$. Then we have 
$\widetilde{s_{1}}=s_{1}^{2}-2s_{2}$, $\widetilde{s_{2}}=s_{2}^{2}-2s_{1}s_{3}$ and 
$\widetilde{s_{3}}=s_{3}^{2}$, and the minimal polynomial of $\mathbf{i}^{2}$ is 
$g(t)=t^{3}-2t^{2}+t-a^{2}$.  Therefore the minimal polynomial of $1+\mathbf{i}^{2}$ is 
$h(t)=g(t-1)=t^{3}-2t^{2}+2t-1-a^{2}$.

The polynomial $h(t)$ computed above is separable over $K$, and therefore the 
prime ideal $\mathfrak{P}$ of $\mathfrak{R}$ that contains $1+\mathbf{i}^{2}$ induces a 
nonarchimedean valuation $\nu$ on $K(\mathbf{i})$.

First, note that $U$ has a unique $\nu$-maximum and a unique 
$\nu$-minimum, because $\nu\left(\frac{1+\mathbf{i}^{2}}{1+(\mathbf{i}+1)^{2}}\right)=1$, 
$\nu\left(\frac{1+(\mathbf{i}+2)^{2}}{1+\mathbf{i}^{2}}\right)=-1$ and
$\nu\left(\frac{1+(\mathbf{i}+1)^{2}}{1+(\mathbf{i}+2)^{2}}\right)=0$.

Now, it is necessary to verify that each of the entries of $V$ and of $V^{-1}$ has 
$\nu$-valuation equal to zero. For that, the entries of $V$ and $V^{-1}$ 
were computed using \textit{Maple 16} (a list of the commands used can be 
found in Section~\ref{sec:maple}, below) and were simplified using the following facts:
\begin{enumerate}[(i)]
\item since $1+\mathbf{i}^{2}  \in \mathfrak{P}$, we have $\mathbf{i}^{2} \equiv -1 
  \pmod{\mathfrak{P}}$;
\item from $\mathbf{i}^{3}-\mathbf{i}-a=0$ we conclude that $\mathbf{i} 
\equiv a \pmod{\mathfrak{P}}$;
\item if $x \in \mathfrak{P} \cap GF(3)(b)[a]$ then $x$ should be divisible 
  by $1+a^{2}$; and
\item since $1+a^{2} \in \mathfrak{P}$, it follows that $a^{2} \equiv -1 
  \pmod{\mathfrak{P}}$.
\end{enumerate}
First, we have
\begin{itemize}
\item $\det(I-W) \equiv (ab)^{-1}(b+2a+2b^{2}) \pmod{\mathfrak{P}}$ and
\item $\det(I+W) \equiv (ab)^{-1}(a+b+b^{2}) \pmod{\mathfrak{P}}$.
\end{itemize}
Now, letting $(u_{ij})= (\det(I-W))V$, we have:
\begin{itemize}
\item $u_{11} \equiv b^{-1}(2b^{2}+b+1) \pmod{\mathfrak{P}}$
\item $u_{12} \equiv b^{-1}+a \pmod{\mathfrak{P}}$
\item $u_{13} \equiv 1+b^{-1}a(b+1) \pmod{\mathfrak{P}}$
\item $u_{21} \equiv -b+1 \pmod{\mathfrak{P}}$
\item $u_{22} \equiv b^{-1}(1+2ab-2b+2ab^{2}) \pmod{\mathfrak{P}}$
\item $u_{23} \equiv 2+b^{-1}a(b+2)  \pmod{\mathfrak{P}}$
\item $u_{31} \equiv b(1+a)+2a \pmod{\mathfrak{P}}$
\item $u_{32} \equiv a+2ab-2b \pmod{\mathfrak{P}}$
\item $u_{33} \equiv b^{-1}(1-b+2ab+2ab^{2}) \pmod{\mathfrak{P}}$,
\end{itemize} 
and letting $(s_{ij})= (\det(I+W))V^{-1}$, we have:
\begin{itemize}
\item $s_{11} \equiv b^{-1}(2+b+ab^{2}) \pmod{\mathfrak{P}}$
\item $s_{12} \equiv b^{-1}+2a \pmod{\mathfrak{P}}$
\item $s_{13} \equiv 1+ab^{-1}(b+2) \pmod{\mathfrak{P}}$
\item $s_{21} \equiv 2-b \pmod{\mathfrak{P}} $
\item $s_{22} \equiv b^{-1}(2-2b+2ab+ab^{2}) \pmod{\mathfrak{P}}$
\item $s_{23} \equiv 1+ab^{-1}(2b+2)\pmod{\mathfrak{P}}$
\item $s_{31} \equiv 2a+2b(1+a) \pmod{\mathfrak{P}} $
\item $s_{32} \equiv a(2+2b(1+a)) \pmod{\mathfrak{P}}$
\item $s_{33} \equiv b^{-1}(2-b+2ab+ab^{2}) \pmod{\mathfrak{P}}$.
\end{itemize}

Therefore, the hypotheses of Theorem~\ref{T:FreeMat} are
satisfied and the lemma follows.
\end{proof}


\section{Proof of Theorem~\ref{T:Weyl}(\normalfont\ref{Weyl:2})} \label{S:Inv2}

Let $\ast$ be the $\Q$-involution of $D_1(\mathbb{Q})$ such that $s^{\ast}=s$ and 
$t^{\ast}=-t$, and let 
$$
u=(1+ts+st)(1-ts-st)^{-1}\quad\text{and}\quad
v=(1-t^{2}s+st^{2})(1+t^{2}s-st^{2})^{-1}.
$$
Then $v^{\ast}=v^{-1}$ and $u^{\ast}=u^{-1}$.

We shall prove that  $\{u, v^{-1}uv\}$ is a  free unitary pair.

We have $\tau(\phi(u))=2(\mathbf{i}+1)\mathbf{i}^{-1}=2a^{-1}(\mathbf{i}+1)^2
(\mathbf{i}+2)$, and $\tau(\phi(v))=(1+2\mathbf{j})(1+\mathbf{j})^{-1}=
(1+b)^{-1}(1+2b+\mathbf{j}+2\mathbf{j}^2)$, and the corresponding matrices with respect 
to the basis $\{1, \mathbf{j}, \mathbf{j}^{2}\}$ are, respectively,

$$
U=\frac{2}{a}\begin{pmatrix}
  (\mathbf{i}+1)^2(\mathbf{i}+2)&0&0\\
  0&\mathbf{i}^2(\mathbf{i}+1)&0\\
  0&0& (\mathbf{i}+2)^2\mathbf{i}
  \end{pmatrix}
$$

and

$$
V= \frac{1}{1+b}\begin{pmatrix}
1+2b   &   1         &     2   \\
2b       & 1+2b   &      1   \\
b       &    2b      &  1+2b
\end{pmatrix}.
$$

It follows that
$$
V^{-1}=\frac{1}{2+b}\begin{pmatrix}
2+2b   &    1       &      1    \\
b        & 2+2b     &     1    \\
b         &   b       & 2+2b
\end{pmatrix}.
$$

Let $\nu$ be the valuation determined by the ideal $\mathfrak{P}$ 
of $\mathfrak{R}$ that contains $\mathbf{i}$. Since $f(t)=t^{3}-t-a$ is 
separable over $K$, it follows that $\mathfrak{P}$ determines  
a nonarchimedean valuation $\nu$ in $K(\mathbf{i})$. Since $U$ is 
$\nu$-max/min and $V^{\pm 1}$ have zero $\nu$-valuation, we are under the 
conditions of Theorem~\ref{T:FreeMat}. The conclusion follows.


\section{Maple Computations} \label{sec:maple}

In order to give a brief indication of how \textit{Maple 16} was used to
perform the computations mentioned in Section~\ref{S:Inv1}, we list
below the package and commands used:

\begin{verbatim}
with(LinearAlgebra)
alias(i=RootOf(x^3-x-a,x) mod 3)
W:=Matrix([[0,2*i,(1/b)(i+1}^2],[i^2,0,2*i+1],[2*b*(i+1),(i+1)^2,0]])
Id:=Matrix([[1,0,0],[0,1,0],[0,0,1]])
A:=Id-W;
B:=Id+W;
eqns:={i=a,a^2=-1};
V:=B.Adjoint(A);
simplify(V,eqns) mod 3;
VV:=A.Adjoint(B);
simplify(VV,eqns) mod 3;
dA:=Determinant(A,method=algnum) mod 3;
simplify(dA,eqns) mod 3;
dB:=Determinant(B,method=algnum) mod 3;
simplify(dB,eqns) mod 3;
\end{verbatim}


\section{Final remarks} \label{S:Rem}

We were not able to exhibit either free symmetric or unitary pairs for 
some of the involutions considered in the previous sections. So, we leave the 
following 

\begin{problem}
Construct for each of the involutions not covered by our proofs free symmetric 
and unitary pairs in $D^{\times}$, where $D$ stands for the division ring of quotients
of either $k\Gamma$ or $A_1(\mathbb{Q})$.
\end{problem}

\section*{Acknowledgment}
We are indebted to Prof.~Donald S.~Passman for pointing gaps in our reasoning 
in earlier versions of this paper.

\end{document}